\newcommand{\inspic}[1]{\begin{tabular}{c}\epsfbox{#1}\end{tabular}}
\newtheorem{theorem}{Theorem}
\newtheorem{corollary}{Corollary}
\newtheorem{lemma}{Lemma}
\newtheorem{proposition}{Proposition}
\def\oM{{\overline{\mathcal{M}}}}
\def\C{\mathbb{C}}
\def\Z{\mathbb{Z}}
\def\F{\mathcal{F}}
\title{Tautological relations in Hodge field theory}
\author{A.~Losev}
\address{Institute for Theoretical and Experimental Physics, Bolshaya 
Che\-remushkinskaya 25, Moscow, 117218, Russia.}
\email{losev@itep.ru}
\author{S.~Shadrin}
\address{Department of Mathematics, University of Zurich,
Win\-ter\-thu\-rer\-stras\-se 190, CH-8057 Zurich, Switzerland.}
\email{sergey.shadrin@math.unizh.ch}
\author{I.~Shneiberg}
\address{Department of Algebra, Faculty of Mechanics and Mathematics,
Moscow State University, Leninskie Gory, GSP, Moscow, 119899, Russia.}
\email{shneiberg@mtu-net.ru}
\begin{document}

\begin{abstract}
We propose a Hodge field theory construction that captures algebraic 
properties of the reduction of Zwiebach invariants to Gromov-Witten invariants. 
It generalizes the Barannikov-Kon\-tse\-vich construction to the case of higher genera 
correlators with gravitational descendants. 

We prove the main theorem stating that algebraically defined 
Hodge field theory correlators
satisfy all tautological relations. 
From this perspective the statement that 
Barannikov-Kontsevich construction provides a solution of the 
WDVV equation looks as the simplest particular case of our theorem. 
Also it generalizes the particular cases of other low-genera tautological
relations proven in our earlier works; we replace the old technical proofs 
by a novel conceptual proof. 
\end{abstract}

\maketitle

\setcounter{tocdepth}{1}
\tableofcontents

%%%%%%%
%%%%%%%
%%%%%%%

%\newpage

\section{Introduction}

In this paper we present an attempt to formalize what
may be called a \emph{string field theory (SFT)} for 
(closed) topological strings with Hodge property.

From the very first days of string theory it was considered
as a kind of generalization of the perturbative expansion of the
quantum field theory in the (functional) integral representation.
The space of graphs with $g$ loops with metrics on edges (Schwinger proper times)
was generalized to moduli space of Riemann surfaces. Indeed,
the latter space really looks like a principle $U(1)^{n}$ bundle
over the former space near the
points of maximal degeneracy (i.e., where maximal number of handles are
pinched).

A natural question is whether there are special
string theories that degenerate exactly to quantum field
theories (may be, of the special kind). Would it happen such
theories should enjoy both finiteness of string
theory and (functional) integral description of quantum field theory.

One of the first attempts to construct a theory of this type was done by 
Zwiebach in~\cite{zwi}. He divided
the moduli space into two regions: the internal piece and the boundary. 
He observed that surfaces representing
the boundary region may be constructed from those representing the internal
piece by gluing them with the help of cylinders (with flat metric).
Therefore, he proposed to take integrals over the moduli spaces
in two steps: first, to take an integral over the internal pieces,
such that this would produce vertices, and then take an integral
along metrics on cylinders, that would exactly reproduce integral
along the Schwinger parameters on graphs in QFT prescription.

In this approach, he came with the infinite number of vertices of different
internal genera and with different number of external legs. However,
he observed that such vertices satisfy quadratic relations
that where a quantum version of some infinity-structure.
At that time community of theoretical physicists seemed not to 
be impressed by the Lagrangian
with infinite number of (almost uncomputable\footnote{
Note, that computation of an integral over a subspace
with a boundary is harder than that one over a compact space.})
vertices. 

The next attempt was done by Witten~\cite{wit-na}. He assumed that in topological
string theories there may be a limit in the space
of two-dimensional theories such that the measure of
integration goes to the vicinity of the points of maximal degeneration.
In the type B theories such limit seems to be the large volume limit
of the target space; 
this motivated Witten's Chern-Simons-like
representation for the topological string theory. 
This approach was
further developed by Bershadsky, Cecotti, Ooguri, and Vafa in~\cite{bcov}.
We note that the tropical limit of Gromov-Witten
 theory~\cite{mikha}
(type A topological strings) seems to realize the same QFT degeneration
of string theory. Indeed, the tropical limit of a Riemann surface mapped to
a toric variety is represented by the graph mapped to
the moment map domain.

In the development of topological string theory it became clear that the
proper object is not just a measure on the moduli space of complex
structures of Riemann surfaces, but rather a differential form on this
space. In original formulation these differential
forms were assigned to the tensor algebra of cohomology of some complex;
such objects are called Gromov-Witten invariants.
We say that Gromov-Witten invariants are QFT-like if
the differential forms of non-zero degree have support only in a
vicinity of the points of maximal degeneration.

We generalized the definition of Gromov-Witten invariants in~\cite{ls} by lifting it
from the cohomology of a complex to the full complex. Such generalization
involved enlargement of the moduli space from
Deligne-Mumford space to Kimura-Stasheff-Voronov space~\cite{ksv},
and we called it Zwiebach invariants (in fact, some pieces of this 
construction appeared earlier in~\cite{zwi} and~\cite{get}).
The complex of states 
involved in the definition of
Zwiebach invariants is a bicomplex due to the action of the second
differential. The second differential represents the 
substitution of a special vector
field corresponding to the constant rotation of
the phase of the local coordinate at a marked point
into differential forms on the Kimura-Stasheff-Voronov space.

Once we have some Zwiebach invariants, it is possible to produce new Zwiebach 
invariants by contraction of an acyclic Hodge sub-bicomplex. In fact, it is one
of the main properties of Zwiebach invariants. 
Consider a sub-bicomplex, where
these two differentials act freely. We call it Hodge contractible
bicomplex.  The operation of
contraction of a Hodge contractible bicomplex turns Zwiebach invariants
into induced Zwiebach invariants on the coset with respect to contactible
bicomplex. Induced Zwiebach invariants are differential forms whose support
is a union of the support of the initial Zwiebach invariants and small
neighbourhoods of the points of maximal degeneration. This procedure
is a generalization from intervals to cylinders of the procedure of induction of
$L_\infty$-structures, see e.~g.~\cite{schaetz, mnev}.

This way we can obtain QFT-like Gromov-Witten invariants.
We just should start with Zwiebach invariants that have (in some suitable sense) 
no support inside the Kimura-Stasheff-Voronov spaces. In fact, it is even enough
to consider a weaker condition, motivated by applications. That is, usually
people consider the integrals of Gromov-Witten invarians only over the
tautological classes in the moduli space of curves.
So, we call a set of Zwiebach invariants 
vertex-like if the integral over the Kimura-Stasheff-Voronov
spaces of any their non-zero component multipled by the pullback
(from the Deligne-Mumford space) of
any tautological class vanishes.

Consider vertex-like Zwiebach invariants. Assume we contract a
Hodge contractible bicomplex down to cohomology. We obtain
differential forms on the Deligne-Mumford space, such that the integral
of the product of any such form of non-zero degree with any tautological 
class vanishes the interior of the moduli space.
Integrals of such forms over the moduli spaces turn out to be sums over graphs
(corresponding to degenerate Riemann surfaces). They resemble Feynman diagramms,
and generation function for the integral over moduli spaces resemble diagrammatic
expansion of perturbative quantum field theory. 

In this paper, we don't construct examples
of vertex-like Zwiebach invariants (we are going to do this explicitly in a
future publication, as well as the corresponding theory for the spaces introduced
in~\cite{lm1,lm2}). Rather we conjecture that they exist and study the
consequences of this assumption. We call the emerging construction
the \emph{Hodge field theory}, and now we will explain it in some detail. 

First of all, degree zero parts of vertex-like Zwiebach invariants
induce the structure of homotopy cyclic Hodge algebra 
on the target complex~\cite{ls}.
We remind that a cyclic Hodge algebra is just a Hodge dGBV-algebra with one additional
axiom ($1/12$-axiom, see below). 

In fact, this structure is interesting by
itself, without any reference to Zwiebach invariants.
It has first appeared  in the paper of Barannikov and Kontsevich~\cite{bk}; 
it captures the properties of
polyvector fields on Calabi-Yau manifolds.  More examples of dGBV algebras
are studied in~\cite{mer} and~\cite{man3c}.
It is possible to understand the structure of dGBV-algebra as a natural
generalization of the algebraic structure studied in~\cite{los}.

In the Hodge field theory construction we consider only a particular case, 
where we obtain axioms of
of a cyclic Hodge algebra itself, not up to homotopy.
We are aware of the fact that demanding existance of vertex-like
Zwiebach invariants simultaneously with vanishing homotopy
piece of cyclic Hodge algebra conditions may be too restrictive,
and while considering only those relations that lead to
axioms of cyclic Hodge algebra may be too weak, however we proceed.

In the Hodge field theory construction we define 
graph expressions for the analogues of 
Gromov-Witten invariants multiplied by tautological classes
using \emph{only cyclic Hodge algebra data}.
We call them Hodge field theory correlators. The corresponding action 
of the Hodge field theory is written down explicitely in Section~\ref{action}.

Our main result is the proof that  the Hodge field theory correlators 
satisfy all universal equations that follow from relations among tautological
classes in cohomology.

The first result of this kind is due to Barannikov and Kontsevich. They have
noticed that there is a solution of the WDVV equation that is associated to a
dGBV-algebra (this solution is the critical value of the BCOV action~\cite{bcov}, 
see~\cite[Appendix]{bk} and~\cite[Appendix]{ls}). Later, we reproved this
in~\cite{ls}. Then, in~\cite{ls,s,ss,shn} we proved some other low-genera
universal equations. Here we generalize all these result and put all
calculations done before in a proper framework.

In particular, the main problem for us was to define a graph expression in tensors of a
cyclic Hodge algebra that corresponds to the full Gromov-Witten potential with descendants.
The first steps were done in~\cite{s,ss}, where we introduced the definition of
descendants at one point in Hodge field theory (mostly for combinatorial reasons). 
But then we observed that it is a part of a natural definition of potential with
descendants in cyclic Hodge algebras that appears as a special case of degeneration of 
vertex-like Zwiebach invariants multiplied by tautological classes. 

In this paper, we present and study this construction. We prove in a completely
algebraic way that Hodge field theory correlators satisfy
the same equations as a Gromov-Witten potential: string, dilaton, and the whole
system of PDEs coming from tautological relations in the cohomology of the moduli
space of curves (see also~\cite{shn} for some preliminary results). 
In what follows we will not only present the proof but also will do our best
relating algebraic definitions and statements on Hodge field theory
to analoguous constructions and theorems in the theory of Zwiebach invariants.

\subsection{Organization of the paper}

In Section~\ref{sec-GW} we remind all necessary facts about the axiomatic Gromov-Witten theory. 
In Section~\ref{sec-Z} we define Zwiebach invariants and explain the motivation to consider 
the sums over graphs in cyclic Hodge algebras. In Section~\ref{sec-defpot} we define cyclic Hodge algebras and 
the corresponding descendant potential. In Section~\ref{sec-proper} we state the main properties 
of the descendant potential in cyclic Hodge algebras, and the rest of the paper is devoted to the proofs.

\subsection{Acknowledgments}

A.L. was supported by the Russian Federal Agency of Atomic Energy and by the grants INTAS-03-51-6346, NSh-8065.2006.2, 
NWO-RFBR-047.011.2004.026 (RFBR-05-02-89000-NWO-a), and RFBR-07-02-01161-a.

S.S. was supported by the grant SNSF-200021-115907/1.
S.S. is grateful to the participants of the Moduli Spaces program at the Mittag-Leffler Institute
(Djursholm, Sweden) for the fruitful discussions of the preliminary versions of the results of this paper. The remarks of C.~Faber, O.~Tommasi, and D.~Zvonkine were especially helpful.

I.S. was supported by the grant RFBR-06-01-00037.

%%%%%%%%%%%%%%%%%%%%%%%%%%%%%%%%%%%%%%%%%%%%%%%%%%%%%%%%%%%%%%%%%%%%%%%%%%%%%
%%%%%%%%%%%%%%%%%%%%%%%%%%%%%%%%%%%%%%%%%%%%%%%%%%%%%%%%%%%%%%%%%%%%%%%%%%%%%
%%%%%%%%%%%%%%%%%%%%%%%%%%%%%%%%%%%%%%%%%%%%%%%%%%%%%%%%%%%%%%%%%%%%%%%%%%%%%
%%%%%%%%%%%%%%%%%%%%%%%%%%%%%%%%%%%%%%%%%%%%%%%%%%%%%%%%%%%%%%%%%%%%%%%%%%%%%
%%%%%%%%%%%%%%%%%%%%%%%%%%%%%%%%%%%%%%%%%%%%%%%%%%%%%%%%%%%%%%%%%%%%%%%%%%%%%

\section{Gromov-Witten theory} \label{sec-GW}

In this section we remind what is Gromov-Witten theory and explain its basic
properties that we are going to reproduce in Hodge field theory construction.

\subsection{Gromov-Witten invariants}

Let us fix a finite dimensional vector space $H_0$ over $\C$ together with the
choice of a homogeneous basis $H_0=\langle e_1,\dots,e_s \rangle$ and a non-degenerate scalar
product $\eta_{ij}=(\cdot,\cdot)$ on it. Let $e_1$ be a distinguished even element of the basis.

Consider the moduli spaces of curves $\oM_{g,n}$. On each $\oM_{g,n}$ we take
a differential form $\Omega_{g,n}$ of mixed degree with values in
$H_0^{\otimes n}$. The whole system of forms $\{\Omega_{g,n}\}$ is called 
\emph{Gromov-Witten invariants}, if it satisfies the axioms~\cite{km,man}:

\begin{enumerate}

\item There are two actions of the symmetric group $S_n$ on $\Omega_{g,n}$.
First, we can relabel the marked points on curves in $\oM_{g,n}$; second, we can
interchange the factors in the tensor product $H_0^{\otimes n}$. We require that
$\Omega_{g,n}$ is equivariant with respect to these two actions of $S_n$. In
other words, one can think that each copy of $H_0$ in the tensor product is
assigned to a specific marked point on curves in $\oM_{g,n}$.

\item The forms must be closed, $d\Omega_{g,n}=0$.

\item\label{pull-back} Consider the mapping $\pi\colon\oM_{g,n+1}\to\oM_{g,n}$ forgetting the last
marked point. Then the correspondence between $\Omega_{g,n}$ and
$\Omega_{g,n+1}$ is given by the formula
\begin{equation}
\pi^*\Omega_{g,n}=\left(\Omega_{g,n+1},e_1\right).
\end{equation}
The meaning of the right hand side is the following. We want to turn a
$H_0^{\otimes n+1}$-valued form into a $H_0^{\otimes n}$-valued one. So, we take the copy of $H_0$
corresponding to the last marked point and contract it with the vector $e_1$
using the scalar product.

\item\label{factor} 
Consider an irreducible boundary divisor in $\oM_{g,n}$, whose generic point is represented by a
two-component curve. It is the image of a natural mapping
$\sigma\colon\oM_{g_1,n_1+1}\times\oM_{g_1,n_2+1}\to\oM_{g,n}$, where
$g=g_1+g_1$ and $n=n_1+n_2$. We require that
\begin{equation}
\sigma^*\Omega_{g,n}=\left(\Omega_{g_1,n_1+1}\wedge\Omega_{g_2,n_2+1},\eta^{-1}\right).
\end{equation}
Here on the right hand side we contract with a scalar product the two copies of
$H_0$ that correspond to the node.

In the same way, consider the divisor of genus $g-1$ curves with
one self-intersection. It is the image of a natural mapping 
$\sigma\colon\oM_{g-1,n+2}\to\oM_{g,n}$. In this case, we require that 
\begin{equation}
\sigma^*\Omega_{g,n}=\left(\Omega_{g-1,n+2},\eta^{-1}\right).
\end{equation}
As before, we contract two copies of $H_0$ corresponding to the node.

\item
We also assume that $\left(\Omega_{0,3},e_1\otimes e_i\otimes e_j\right)=\left(e_i,e_j\right)=\eta_{ij}$.

\end{enumerate}

\subsection{Gromov-Witten potential}

Let us associate to each $e_i$ the set of formal variables $T_{n,i}$,
$n=0,1,2\dots$. By $F_g$ denote the formal power series in
these variables defined as 
\begin{equation}
F_g:=
\sum_{n}\frac{1}{n!}\sum_{a_1,\dots,a_n\geq 0}\int_{\oM_{g,n}}
\left(\Omega_{g,n}\prod_{i=1}^n\psi_i^{a_i},
\bigotimes_{i=1}^n\sum_{j=1}^s e_jT_{a_i,j}\right).
\end{equation}
The first sum is taken over $n\geq 3$ for $g=0$, $n\geq 1$ for $g=1$, 
and $n\geq 0$ for $g\geq 2$. On the right hand side, we contract each 
copy of $H_0$ with the factor of the tensor product associated to the 
same marked point.

The formal power series $\F:=\exp(\sum_{g\geq 0} \hbar^{g-1}F_g)$ is called 
Gromov-Witten potential associated to the system of Gromov-Witten invariants
$\{\Omega_{g,n}\}$. The coefficients of $F_g$, $g\geq 0$, are called correlators 
and denoted by
\begin{equation}
\langle \tau_{a_1,i_1}\dots \tau_{a_n,i_n} \rangle_g:=
\int_{\oM_{g,n}}
\left(\Omega_{g,n}\prod_{j=1}^n\psi_j^{a_j},
\bigotimes_{j=1}^n e_{i_j} \right).
\end{equation}
Vectors $e_{i_1},\dots,e_{i_n}$ are called primary fields.

The main properties of GW potentials come from geometry of the moduli space 
of curves. First, one can prove that coefficients of $\F$ satisfy string and dilaton equations:
\begin{align}
\langle \tau_{0,1}\prod_{j=1}^n \tau_{a_j,i_j} \rangle_g
&=\sum_{j=1}^n \langle \tau_{a_j-1,i_j}\prod_{k\not=j} \tau_{a_k,i_k} \rangle_g;
\label{string}\\
\langle \tau_{1,1}\prod_{j=1}^n \tau_{a_j,i_j} \rangle_g
&=(2g-2+n) \langle \prod_{j=1}^n \tau_{a_j,i_j} \rangle_g.
\end{align}

The string equation is a corollary of the fact that $\pi^*\psi_j=\psi_j-D_j$; 
here $\pi\colon\oM_{g,n+1}\to\oM_{g,n}$ is the projection forgetting the last marked point
and $D_j$ is the divisor in $\oM_{g,n+1}$ whose generic point is represented by a two-component 
curve with one node such that one component has genus $0$ and contains exactly two marked points, 
the $i$-th and the $(n+1)$-th ones. It is assumed that $\sum_{j=1}^n a_j > 0$.

The dilaton equation is a corollary of the fact that, in the same notations, $\pi_*\psi_{n+1}=2g-2+n$.
Of course, we assume that $2g-2+n > 0$.

Second, any relation in the cohomology of $\oM_{g,n}$ among natural $\psi$-$\kappa$-strata gives 
a relation for the correlators. Let us explain this in more detail.

\subsection{Tautological relations}

\subsubsection{Stable dual graphs}
The moduli space of curves $\oM_{g,n}$~\cite{hamo} has a natural stratification 
by the topological type of stable curves. We can combine natural strata with 
$\psi$-classes at marked points and at nodes and $\kappa$-classes on the moduli 
spaces of  irreducible components. These objects are called $\psi$-$\kappa$-strata.

A convenient way to describe a $\psi$-$\kappa$-stratum in $\oM_{g,n}$ is the 
language of stable dual graphs. Take a generic curve in the stratum. To each 
irreducible component we associate a vertex marked by its genus. To each node 
we associate an edge connecting the corresponding vertices (or a loop, if it is 
a double point of an irreducible curve). If there is a marked point on a component, 
then we add a leaf at the corresponding vertex, and we label leaves in the same 
way as marked points. If we multiply a stratum by some $\psi$-classes, then we just 
mark the corresponding leaves or half-edges (in the case when we add $\psi$-classes at nodes) 
by the corresponding powers of $\psi$. Also we mark each vertex by the $\kappa$-class 
associated to it.

Let us remark that by $\kappa$-classes on $\oM_{g,n}$ we mean the classes 
\begin{equation}
\kappa_{k_1,\dots,k_l}:=\pi_*\left(\prod_{j=1}^{l}\psi_{n+j}^{k_j+1}\right),
\end{equation}
where $\pi\colon\oM_{g,n+l}\to\oM_{g,n}$ is the projections forgetting the last $l$ 
marked points. It is just another additive basis in the ring generated by the ordinary 
$\kappa$-classes ($\kappa_k$, $k\geq 1$, in our notations). The basic properties of these classes 
are stated in~\cite{fsz}.

\subsubsection{Integrals over $\psi$-$\kappa$-strata}\label{integrals-strata}

Using the properties of GW invariants, one can express the integral of $\Omega_{g,n}$ over a 
$\psi$-$\kappa$-stratum $S$ in terms of correlators. 

Consider a special case, when $S$ is represented by a two-vertex graph with no $\psi$- and 
$\kappa$-classes. Then, according to axiom~\ref{factor}, the integral of $\Omega_{g,n}$
is the product of integrals of $\Omega_{g_1,n_1+1}$ and $\Omega_{g_2,n_2+1}$ over the moduli 
spaces corresponding to the vertices, contracted by the scalar product:
\begin{align}
\int_{S}\left(\Omega_{g,n},\bigotimes_{j=1}^n e_{i_j}\right)
&=
\int_{\oM_{g_1,n_1+1}}
\left(\Omega_{g_1,n_1+1},
\bigotimes_{j\in J_1}e_{i_j}\otimes e_{i'}\right) \\
&\eta^{i'i''}
\int_{\oM_{g_2,n_2+1}}
\left(\Omega_{g_2,n_2+1},
\bigotimes_{j\in J_2}e_{i_j}\otimes e_{i''}\right).\notag
\end{align}
Here we assume that the genus of one component of a generic curve in $S$ is
$g_1$ and $n_1$ marked points with labels $j\in J_1$, $|J_1|=n_1$, are on this
component. The other component has genus $g_2$ and $n_2$ marked points with
labels $j\in J_2$, $|J_2|=n_2$, lie on it. Of course, $g_1+g_2=g$, $n_1+n_2=n$.

Now consider a special case, when $S$ is represented by a one-vertex graph with $\psi$- and 
$\kappa$-classes. Let us assign a vector in the basis of $H_0$ to each leaf (to each marked 
point). Then, according to axiom~\ref{pull-back} the integral 
\begin{equation}
\int_{\oM_{g,n}}\left(\Omega_{g,n}\prod_{j=1}^n\psi_j^{a_j} \kappa_{b_1,\dots,b_k},\bigotimes_{j=1}^n e_{i_j}\right)
\end{equation}
is equal to
\begin{equation}
\int_{\oM_{g,n+k}}
\left(\Omega_{g,n+k}\prod_{j=1}^n\psi_j^{a_j}
\prod_{j=1}^k\psi_{n+j}^{b_j+1},
\bigotimes_{j=1}^n e_{i_j}\otimes e_1^{\otimes k}\right).
\end{equation}

Combining these two special cases one can obtain an expression in correlators that 
corresponds to an arbitrary $\psi$-$\kappa$-stratum.

\subsubsection{Relations for correlators}\label{rel-for-corr}

Suppose that we have a linear combination $L$ of $\psi$-$\kappa$-strata that is equal to $0$ in the 
cohomology of $\oM_{g,n}$ (a tautological relation). Since $d\Omega_{g,n}=0$, the integral of 
$\left(\Omega_{g,n},\bigotimes_{j=1}^ne_{i_j}\right)$ over $L$ is equal to zero,
for an arbitrary choice of primary fields. This gives an equation for correlators.

Usually, one consider also the pull-backs of $L$ to $\oM_{g,n+n'}$, $n'\geq 0$,
multiplied by arbitrary monomials of $\psi$-classes. Of course, they are also
represented as vanishing linear combinations of $\psi$-$\kappa$-strata. 
This gives a system of PDEs
for the formal power series $F_g$, $g\geq 0$. For the detailed description of the
correspondence between tautological relations and universal PDEs for GW
potentials see, e.~g., \cite{g} or~\cite{fsz}.

There are $8$ basic tautological relations known at the moment: WDVV, Getzler, 
Belorousski-Pandharipande,  and topological recursion relations in $\oM_{0,4}$, 
$\oM_{1,1}$, $\oM_{2,1}$, $\oM_{2,2}$, $\oM_{3,1}$~\cite{g,g2,bp,kl}.

%%%%%%%%%%%%%%%%%%%%%%%%%%%%%%%%%%%%%%%%%%%%%%%%%%%%%%%%%%%%%%%%%%%%%%%%%%%%%
%%%%%%%%%%%%%%%%%%%%%%%%%%%%%%%%%%%%%%%%%%%%%%%%%%%%%%%%%%%%%%%%%%%%%%%%%%%%%
%%%%%%%%%%%%%%%%%%%%%%%%%%%%%%%%%%%%%%%%%%%%%%%%%%%%%%%%%%%%%%%%%%%%%%%%%%%%%
%%%%%%%%%%%%%%%%%%%%%%%%%%%%%%%%%%%%%%%%%%%%%%%%%%%%%%%%%%%%%%%%%%%%%%%%%%%%%
%%%%%%%%%%%%%%%%%%%%%%%%%%%%%%%%%%%%%%%%%%%%%%%%%%%%%%%%%%%%%%%%%%%%%%%%%%%%%

\section{Zwiebach invariants}\label{sec-Z}

In Gromov-Witten theory (and also in topological string theory) the
Gromov-Witten invariants is usually a structure on the cohomology
of a target manifold (the space $H_0$) of on the cohomology of a complex of some
other gometric origin. We have introduced
the notion of Zwiebach invariants in~\cite{ls} in order to formalize in a
convenient way what physicists mean by topological conformal quantum 
field theory at the level of a complex rather than at the level of the
cohomology. 

The very general principles of homological algebra imply that
algebraic stuctures
on the cohomology are often induced by some fundamental structures on a full
complex (the standard example is the induction of the infinity-structures
from differential graded algebraic structures).
Such induction usually can be represented as a sum over trees with 
vertices corresponding to fundamental operations and edges corresponding 
to the homotopy that contracts the complex to its cohomology.

We would like to stress that Gromov-Witten invariants also can be considered as
an induced structure on the cohomology of a complex. In this case, the
fundamental structure on the whole complex is determined by Zwiebach invariants.

We are able to associate some structure on a bicomplex with a special
compactification of the moduli space curves (Kimura-Sta\-sheff-Voronov
compactification). So, complexes are replaced by bicomplexes, where 
the second differential reflects the rotation of attached cylinders (or
circles). This is an appearance of the string nature of the problem.

As an induced structure we indeed obtain a Gromov-Witten-type theory that, under
some additional assumptions, can be presented in terms of a sum over graphs.
Below we explain the whole construction, following~\cite{ls} and with some
additional details. 

\subsection{Kimura-Stasheff-Voronov spaces}

We remind the construction of the Kimura-Sta\-sheff-Voronov compactification 
$\overline{\mathcal{K}}_{g,n}$ of the moduli space of curves of 
genus $g$ with $n$ marked point. It is a real blow-up of $\oM_{g,n}$; 
we just remember the relative angles at double points. We can also 
choose an angle of the tangent vector at each marked point; this 
way we get the principal $U(1)^n$-bundle over $\overline{\mathcal{K}}_{g,n}$. 
We denote the total space of this bundle by $\overline{\mathcal{S}}_{g,n}$.

There are also the standard mappings between different spaces $\overline{\mathcal{S}}_{g,n}$. 
First, one can consider the projection 
$\pi\colon\overline{\mathcal{S}}_{g,n+1}\to\overline{\mathcal{S}}_{g,n}$
forgetting the last marked point. Suppose that under the projection we 
have to contract a sphere that contains the points $x_i$, $x_{n+1}$, and a node.
Denote the natural coordinates on the circles corresponding to $x_i$ and a node on a curve in 
$\overline{\mathcal{S}}_{g,n+1}$ by $\phi_i$ and $\theta$. Let $\tilde\phi_i$ be a coordinate on 
the circle corresponding to $x_i$ in $\overline{\mathcal{S}}_{g,n}$. 
Then $\tilde\phi_i=\phi_i+\theta$ under the projection $\pi$. In the same way, if we contract 
a sphere that contains two nodes and $x_{n+1}$, then $\tilde\theta=\theta_1+\theta_2$, where $\theta_1$ 
and $\theta_2$ are the coordinates on the circles corresponding to the two nodes of a curve in 
$\overline{\mathcal{S}}_{g,n+1}$ and $\tilde\theta$ is a coordinate on the circle at the resulting 
node in $\overline{\mathcal{S}}_{g,n}$

In the same way, when we consider the mappings 
$\sigma\colon\overline{\mathcal{S}}_{g_1,n_1+1}\times\overline{\mathcal{S}}_{g_1,n_2+1}
\to\overline{\mathcal{S}}_{g,n}$ representing the natural boundary components of 
$\overline{\mathcal{S}}_{g,n}$, we have $\theta=\phi_{n_1+1}+\phi_{n_2+1}$, where 
$\phi_{n_1+1}$ and $\phi_{n_2+1}$ are the coordinates on the circles corresponding the points 
that are glued by $\sigma$ into the node and $\theta$ is the coordinate on the circle at the node.
For the mapping $\sigma\colon\overline{\mathcal{S}}_{g-1,n+2}
\to\overline{\mathcal{S}}_{g,n}$ we also have $\theta=\phi_{n+1}+\phi_{n+2}$ with the same notations.

\subsection{Zwiebach invariants}

Let us fix a Hodge bicomplex $H$ with two differentials denoted by $Q$ and $G_{-}$ 
and with an even scalar product $\eta=(\cdot ,\cdot)$ invariant under the differentials:
\begin{equation}
(Qv,w)=\pm(v,Qw),\quad (G_-v,w)=\pm(v,G_-w). 
\end{equation}
The Hodge property means that 
\begin{equation}
H=H_0\oplus
\bigoplus_{\alpha} 
\langle 
e_\alpha, Qe_\alpha, G_-e_\alpha, QG_-e_\alpha
\rangle,
\end{equation}
where $QH_0=G_-H_0=0$ and $H_0$ is orthogonal to $H_4$.

Below we consider the action of $Q$ and $G_-$ on $H^{\otimes n}$. 
We denote by $Q^{(k)}$ and $G_-^{(k)}$ the action of $Q$ and $G_-$ 
respectively on the $k$-th component of the tensor product.

On each $\overline{\mathcal{S}}_{g,n}$ we take
a differential form $C_{g,n}$ of the mixed degree with values in
$H_0^{\otimes n}$. The whole system of forms $\{C_{g,n}\}$ is called 
\emph{Zwiebach invariants}, if it satisfies the axioms:

\begin{enumerate}

\item $\Omega_{g,n}$ is $S_n$-equivariant.

\item $(d+Q)\Omega_{g,n}=0$, $Q=\sum_{i=1}^nQ^{(i)}$.

\item $(G_-^{(k)}+\imath_k)C_{g,n}=0$ for all $1\leq k\leq n$ (we denote by $\imath_k$ the substitution of the vector field generating the action on $\overline{\mathcal{S}}_{g,n}$ of the $k$-th copy of $U(1)$); $C_{g,n}$ is invariant under the action of $U(1)^n$;

\item $\pi^*C_{g,n}=\left(C_{g,n+1},e_1\right)$, where
$\pi\colon\overline{\mathcal{S}}_{g,n+1}\to\overline{\mathcal{S}}_{g,n}$ is the mapping forgetting the last
marked point. 

\item $\sigma^*C_{g,n}=\left(C_{g_1,n_1+1}\wedge C_{g_2,n_2+1},\eta^{-1}\right)$,
where $\sigma\colon\overline{\mathcal{S}}_{g_1,n_1+1}\times
\overline{\mathcal{S}}_{g_1,n_2+1}\to\overline{\mathcal{S}}_{g,n}$ represents the boundary component.
In the same way, 
$\sigma^*C_{g,n}=\left(C_{g-1,n+2},\eta^{-1}\right)$
for the mapping $\sigma\colon\overline{\mathcal{S}}_{g-1,n+2}\to\overline{\mathcal{S}}_{g,n}$.

\item $\left(C_{0,3},e_1\otimes v_\alpha\otimes v_\beta\right)=\left((Id+d\phi_2 G_-)v_\alpha,(Id+d\phi_3 G_-)v_\beta\right)$, $\phi_2$ and $\phi_3$ are the coordinates on the circles at the corresponding points.

\end{enumerate}

Zwiebach invariants on the bicomplex with zero differentials 
determine Gromov-Witten invariants. Indeed, in this case the factorization property 
implies that $\{C_{g,n}\}$ is lifted from 
the blowdown of Kimura-Stasheff-Voronov spaces, i.e. it is determined 
by a set of forms on Deligne-Mumford spaces.
Then it is easy to check that this system of forms satisfied all axioms of Gromov-Witten invariants.

\subsection{Induced Zwiebach invariants}
Induced Zwiebach invariants are obtained by the contraction of $H_4$.
We denote by $G_+$ the contraction operator. This means that $G_+H_0=0$, $\Pi=\{Q,G_+\}$ is the projection to $H_4$ along $H_0$, $\{G_+,G_-\}=0$, and $(G_+v,w)=\pm(v,G_+w)$.

We construct an induced Zwiebach form $C^{ind}_{g,n}$ 
on a homotopy equivalent modification $\tilde{\mathcal{S}}_{g,n}$ of the space $\overline{\mathcal{S}}_{g,n}$. 
At each boundary component $\gamma$ we glue the cylinder $\gamma\times[0,+\infty]$ such that $\gamma$ in $\overline{\mathcal{S}}_{g,n}$ is identified with $\gamma\times\{0\}$ in the cylinder.

So, we have the mappings $\tilde\sigma\colon\tilde{\mathcal{S}}_{g_1,n_1+1}\times
\tilde{\mathcal{S}}_{g_1,n_2+1}\times[0,+\infty]\to\tilde{\mathcal{S}}_{g,n}$ and 
$\tilde\sigma\colon\tilde{\mathcal{S}}_{g-1,n+2}\times[0,+\infty]\to\tilde{\mathcal{S}}_{g,n}$
representing the boundary components with glued cylinders.
We take a form $C_{g,n}$, restrict it to $H_0^{\otimes n}$, and extend it to the glued cylinder 
by the rule that
\begin{equation}
\tilde\sigma^*C^{ind}_{g,n}=\left(C^{ind}_{g_1,n_1+1}\wedge C^{ind}_{g_2,n_2+1}, [e^{-t\Pi-dt\cdot G_+}] \right)
\end{equation}
in the first case and 
\begin{equation}
\tilde\sigma^*C^{ind}_{g,n}=\left(C^{ind}_{g-1,n+2}, [e^{-t\Pi-dt\cdot G_+}] \right)
\end{equation}
in the second case, where $[e^{-t\Pi-dt\cdot G_+}]$ is the bivector obtained from the operator $e^{-t\Pi-dt\cdot G_+}$, $t$ is the coordinate on $[0,+\infty]$. This determines $C^{ind}_{g,n}$ completely.

Now it is a straightforward calculation to check that the forms $C^{ind}_{g,n}$ are $(d+Q)$-closed and satisfy the factorization property when restricted to the strata $\gamma\times\{+\infty\}$.

\subsection{Induced Gromov-Witten theory}
The induced Zwiebach invariants determine Gromov-Witten invariants. 
The correlators of the corresponding Gromov-Witten 
potential are given by the integrals over the fundamental cycles of 
$\tilde{\mathcal{K}}_{g,n}$ (we just forget the circles at marked 
points in $\tilde{\mathcal{S}}_{g,n}$) of the forms $C^{ind}_{g,n}\prod_{i=1}^n\psi_i^{a_i}$.

In fact, the fundamental class of $\tilde{\mathcal{K}}_{g,n}$ is represented as a sum over 
all irreducible boundary strata in $\oM_{g,n}$. Indeed, a boundary stratum $\gamma$ in 
$\oM_{g,n}$ has real codimension equal to the doubled number of the nodes of its generic 
curve. But then we add in $\tilde{\mathcal{K}}_{g,n}$ a real two-dimensional cylinder for 
each node. A simple explicit calculation allows to express the integral over the component 
of the fundamental cycle of $\tilde{\mathcal{K}}_{g,n}$ corresponding to $\gamma$. It splits 
into the integrals of the initial Zwiebach invariants (multiplied by $\psi$-classes) over 
the moduli spaces corresponding to the irreducible components of curves in $\gamma$; they 
are contracted with the bivectors $[G_-G_+]$ (obtained from the operator $G_-G_+$ via the 
scalar product) corresponding to the nodes according to the topology of curves in $\gamma$.

So, we represent the correlators of the induced Gromov-Witten theory as sums over graphs. 
Then one can observe that 
$C_{0,3}$ determines a multiplication on $H$. Topology of the spaces $S_{0,4}$
and $S_{1,1}$ implies that the  whole algebraic structure that we obtain on $H$
is  the structure of cyclic Hodge algebra up to $Q$-homotopy, see~\cite{ls}. Let
 us assume that the initial system of Zwiebach invariants is simple enough,
 i.~e.,  it induces the explicit structure of cyclic Hodge algebra on $H$ and
 only  the integrals of the zero-degree parts of the initial Zwiebach invariants
  (multiplied by $\psi$-classes) are non-vanishing on fundamental cycles. In
	this  case, the induced Gromov-Witten potential can be described in very
	simple  algebraic terms. It is the motivation of the definition of the
Hodge field theory construction
given in the next section.

%%%%%%%%%%%%%%%%%%%%%%%%%%%%%%%%%%%%%%%%%%%%%%%%%%%%%%%%%%
%%%%%%%%%%%%%%%%%%%%%%%%%%%%%%%%%%%%%%%%%%%%%%%%%%%%%%%%%%
%%%%%%%%%%%%%%%%%%%%%%%%%%%%%%%%%%%%%%%%%%%%%%%%%%%%%%%%%%
%%%%%%%%%%%%%%%%%%%%%%%%%%%%%%%%%%%%%%%%%%%%%%%%%%%%%%%%%%
%%%%%%%%%%%%%%%%%%%%%%%%%%%%%%%%%%%%%%%%%%%%%%%%%%%%%%%%%%

\section{Construction of correlators in Hodge field theory}\label{sec-defpot}

In this section, we describe in a very formal algebraic way the sum over graphs
obtained as an expression for the Gromov-Witten potential induced from Zwiebach
invariants in the previous section. 

\subsection{Cyclic Hodge algebras}\label{cyclic Hodge algebras}
In this section, we recall the definition of cyclic Hodge
dGBV-alge\-bras~\cite{ls,s,ss,man} 
(cyclic Hodge algebras, for short). A supercommutative associative $\C$-algebra $H$ with
unit is called cyclic Hodge algebra, 
if there are two odd linear operators $Q,G_-\colon H\to H$ and an even linear function 
$\int\colon H\to\C$ called integral. They must satisfy the following axioms:

\begin{enumerate}

\item $(H,Q,G_-)$ is a bicomplex: 
\begin{equation}
Q^2=G_-^2=QG_-+G_-Q=0;
\end{equation}

\item $H=H_0\oplus H_4$, where $QH_0=G_-H_0=0$ and $H_4$ is represented as 
a direct sum of subspaces of dimension $4$ generated by 
$e_\alpha, Qe_\alpha, G_-e_\alpha, QG_-e_\alpha$ for some vectors 
$e\in H_4$, i.~e. 
\begin{equation}
H=H_0\oplus
\bigoplus_{\alpha} 
\langle 
e_\alpha, Qe_\alpha, G_-e_\alpha, QG_-e_\alpha
\rangle
\end{equation}
(Hodge decomposition);

\item $Q$ is an operator of the first order, it satisfies the Leibniz rule: 
\begin{equation}
Q(ab)=Q(a)b+(-1)^{\tilde a}aQ(b)
\end{equation}
(here and below we denote by $\tilde a$  the parity of $a\in H$);

\item $G_-$ is an operator of the second order, 
it satisfies the $7$-term relation:
\begin{align}
G_-(abc)& = G_-(ab)c+(-1)^{\tilde b(\tilde a+1)}bG_-(ac)
+(-1)^{\tilde a}aG_-(bc)\\
& -G_-(a)bc-(-1)^{\tilde a}aG_-(b)c
-(-1)^{\tilde a+\tilde b}abG_-(c). \notag
\end{align}

\item $G_-$ satisfies the property called $1/12$-axiom: 
\begin{equation}
str(G_-\circ a\cdot)=(1/12)str(G_-(a)\cdot)
\end{equation}
(here $a\cdot$ and $G_-(a)\cdot$ are the 
operators of multiplication by $a$ and $G_-(a)$ respectively, $str$ means
supertrace).

\end{enumerate}

Define an operator $G_+\colon H\to H$ related to the particular choice of Hodge
decomposition. We put $G_+H_0=0$, and on each subspace 
$\langle e_\alpha, Qe_\alpha, G_-e_\alpha, QG_-e_\alpha \rangle$ 
we define $G_+$ as 
\begin{align}
& G_+e_\alpha=G_+G_-e_\alpha=0, \\
& G_+Qe_\alpha=e_\alpha, \notag \\
& G_+QG_-e_\alpha =G_-e_\alpha. \notag
\end{align}
We see that $[G_-,G_+]=0$; $\Pi_4=[Q,G_+]$ is the projection to $H_4$ along $H_0$; 
$\Pi_0=\mathrm{Id}-\Pi_4$ is the projection to $H_0$ along $H_4$. 

Consider the integral $\int\colon H\to\C$. We require that
\begin{align}
& \int Q(a)b = (-1)^{\tilde a+1}\int aQ(b), \\
& \int G_-(a)b = (-1)^{\tilde a}\int aG_-(b), \notag \\
& \int G_+(a)b  = (-1)^{\tilde a}\int aG_+(b). \notag
\end{align}
These properties imply that
$\int G_-G_+(a)b=\int aG_-G_+(b)$, $\int \Pi_4(a)b=\int a\Pi_4(b)$, and
$\int \Pi_0(a)b=\int a\Pi_0(b)$.

We can define a scalar product on $H$ as $(a,b)=\int ab.$ We suppose that this scalar product 
is non-degenerate. Using the scalar product we may turn any operator $A: H \to H$
into the bivector that we denote by $[A]$.

\subsection{Tensor expressions in terms of graphs}\label{tensors}
Here we explain a way to encode some tensor expressions over an arbitrary vector space in terms of graphs.

Consider an arbitrary graph (we allow graphs to have leaves and we require vertices to be at least of 
degree $3$, the definition of graph that we use can be found in~\cite{man}). We associate a symmetric 
$n$-form to each internal vertex of degree $n$, a symmetric bivector 
to each egde, and a vector to each leaf. Then we can substitute the tensor product of all vectors in leaves 
and bivectors in edges into the product of $n$-forms in vertices, distributing the components of tensors in 
the same way as the corresponding edges and leaves are attached to vertices in the graph. This way we get a 
number.

Let us study an example:
\begin{equation}
\inspic{arb.1}
\end{equation}
We assign a $5$-form $x$ to the left vertex of this graph and a $3$-form $y$ to the right vertex. 
Then the number that we get from this graph is $x(a,b,c,v,w)\cdot y(v,w,d)$.

Note that vectors, bivectors and $n$-forms used in this construction can depend on some variables. 
Then what we get is not a number, but a function.

\subsection{Usage of graphs in cyclic Hodge algebras}\label{usage}

Consider a cyclic Hodge algebra $H$. There are some standard tensors over $H$, 
which we associate to elements of graphs below. Here we introduce the notations for these tensors.

We always assign the form
\begin{equation}\label{internal-vertex}
(a_1,\dots,a_n)\mapsto \int a_1\cdot\dots\cdot a_n
\end{equation}
to a vertex of degree $n$.

There is a collection of bivectors that will be assigned below to edges: 
$[G_-G_+]$, $[\Pi_0]$, $[Id]$, $[QG_+]$, $[G_+Q]$, $[G_+]$, and $[G_-]$. 
In pictures, edges with these bivectors will be denoted by
\begin{equation}\label{edges}
\inspic{arb.2},\quad \inspic{arb.3},\quad \inspic{arb.4},\quad \inspic{arb.5},
\quad \inspic{arb.6},\quad \inspic{arb.7},\quad \inspic{arb.8},
\end{equation}
respectively. Note that an empty edge corresponding to the bivector $[Id]$ can 
usually be contracted (if it is not a loop).

The vectors that we will put at leaves depend on some variables. Let $\{e_1,\dots,e_s\}$ be a
homogeneous basis of $H_0$. In particular, we assume that $e_1$ is the unit of
$H$. To each vector $e_i$ we associate formal variables $T_{n,i}$, 
$n\geq 0$, of the same parity as $e_i$. Then we will put at a leaf one of the vectors 
$E_n=\sum_{i=1}^s e_i T_{n,i}$, $n\geq 0$, and we will mark such leaf by the
number $n$. In our picture, an empty leaf is the same as the leaf marked by $0$.

\subsubsection{Remark}

There is a subtlety related to the fact that $H$ is a $\Z_2$-graded space. 
In order to give an honest definition we must do the following. Suppose we 
consider a graph of genus $g$. We can choose $g$ edges in such a way that 
the graph being cut at these edge turns into a tree. To each of these edges 
we have already assigned a bivector $[A]$ for some operator $A\colon H\to H$. 
Now we have to put the bivector $[JA]$ instead of the bivector $[A]$, where 
$J$ is an operator defined by the formula $J\colon a\mapsto (-1)^{\tilde a} a$.

In particular, consider the following graph (this is also an example to the 
notations given above):
\begin{equation}
\inspic{arb.9}
\end{equation}
An empty loop corresponds to the bivector $[Id]$. An empty leaf corresponds 
to the vector $E_0$. A trivalent vertex corresponds to the $3$-form given by 
the formula $(a,b,c)\mapsto\int abc$.

If we ignore this remark, then what we get is just the trace of the operator 
$a\mapsto E_0\cdot a$. But using this remark we get the supertrace of this operator.

In fact, this subtlety will play no role in this paper. It affects only some 
signs in calculations and all these signs will be hidden in lemmas shared 
from~\cite{ls,s}. So, one can just ignore this remark.

\subsection{Correlators}\label{subsec-correlators}

We are going to define the potential using correlators. Let
\begin{equation}
\langle \tau_{k_1}(V_1)\dots \tau_{k_n}(V_n) \rangle_g
\end{equation}
be the sum over graphs of genus $g$ with $n$ leaves marked by $\tau_{k_i}(V_i)$, $i=1,
\dots, n$, where $V_1,\dots,V_n$ are vectors in $H$, and $\tau_{k_i}$ are just formal
symbols.
The index of each internal vertex of these graphs is $\geq 3$; we associate to it
the symmetric form~\eqref{internal-vertex}.
There are two possible types of edges: edges marked by $[G_-G_+]$ (thick black
dots in pictures, ``heavy edges'' in the text) and edges marked by $[Id]$ (empty edges). Since 
an empty edge connecting two different vertices can be contracted, we assume that all 
empty edges are loops.

Consider a vertex of such graph. Let us describe all possible half-edges adjusted to
this vertex. There are $2g$, $g\geq 0$, half-edges coming from $g$ empty loops; $m$ 
half-edges coming from heavy edges of graph, and $l$ leaves 
marked $\tau_{k_{a_1}}(V_{a_1}), \dots, \tau_{k_{a_l}}(V_{a_l})$. Then we say that 
the type of this vertex is $(g,m;k_{a_1},\dots,k_{a_l})$. We denote the type of a 
vertex $v$ by $(g(v),m(v);k_{a_1(v)},\dots,k_{a_{l(v)}(v)})$.

Consider a graph $\Gamma$ in the sum determining the correlator
\begin{equation}
\langle \tau_{k_1}(V_1)\dots \tau_{k_n}(V_n) \rangle_g
\end{equation}
We associate to $\Gamma$ a number: we contract according to the graph structure all
tensors corresponding to its vertices, edges, and leaves (for leaves, we take
vectors $V_1,\dots,V_n$). Let us denote this number by $T(\Gamma)$.

Also we weight each graph by a coefficient which is the product of two
combinatorial constants. The first factor is equal to
\begin{equation}
V(\Gamma)=\frac{\prod_{v\in Vert(\Gamma)}2^{g(v)}g(v)!}{|\mathrm{aut}(\Gamma)|}.
\end{equation}
Here $|\mathrm{aut}(\Gamma)|$ is the order of the automorphism group of the labeled
graph $\Gamma$, $Vert(\Gamma)$ is the set of internal vertices of $\Gamma$. In other
words,
we can label each vertex $v$ by $g(v)$, delete all empty loops, and then we get a
graph with the order of the automorphism group equal to $1/V(\Gamma)$.

The second factor is equal to
\begin{equation}
P(\Gamma)=\prod_{v\in Vert(\Gamma)}\int_{\oM_{g(v),m(v)+l(v)}}\psi_1^{a_1(v)}\dots
\psi_{l(v)}^{a_{l(v)}(v)}.
\end{equation}

The integrals used in this formula can be calculated with the help of the
Witten-Kontsevich theorem~\cite{Witten,Kontsevich,Okounkov-Pandharipande,
Mirzakhani,Kazarian-Lando,Kim-Liu,Chen-Li-Liu}.

So, the whole contribution of the graph $\Gamma$ to the correlator is equal to
$V(\Gamma)P(\Gamma)T(\Gamma)$. One can check that the non-trivial contribution to 
the correlator $\langle\tau_{k_1}(V_1)\dots \tau_{k_n}(V_n) \rangle_g$ is given only 
by graphs that have exactly $3g-3+n-\sum_{i=1}^n k_i$ heavy edges.

The geometric meaning here is very clear. The number $T(\Gamma)$ comes from the
integral of the induced Gromov-Witten invariants of degree zero, while the
coefficient $V(\Gamma)P(\Gamma)$ is exactly the combinatorial interpretation of
the intersection number of $\psi_1^{k_1}\dots\psi_n^{k_n}$ with the stratum
whose dual graph is obtained from $\Gamma$ by the procedure described after the
definition of $V(\Gamma)$.

\subsection{Potential}

We fix a cyclic Hodge algebra and consider the formal power series $\F=\F(T_{n,i})$ defined as
\begin{multline}\label{potential}
\F=
\exp\left(\sum_{g=0}^\infty \hbar^{g-1} F_g\right) \\
=\exp\left(\sum_{g=0}^\infty \hbar^{g-1} \sum_{n} \frac{1}{n!} \sum_{a_1,\dots,a_n \in \Z_{\geq 0}}
\langle \tau_{a_1}(E_{a_1})\dots \tau_{a_n}(E_{a_n})\rangle_g\right).
\end{multline}

Abusing notations, we allow to mark the leaves by $\tau_{a}(E_{a})$, $E_{a}$, or
$a$; all this variants are possible and denote the same.

\subsection{Trivial example}

For example, consider the trivial cyclic Hodge algebra: $H=H_0=\langle e_1\rangle$,
$Q=G_-=0$, $\int e_1=1$.
Then $E_a=e_1\cdot t_a$, and the correlator 
$
\langle\tau_{a_1}(E_{a_1})\dots\tau_{a_n}(E_{a_n})\rangle_g
$
consists just of one graph with one vertex, $g$ empty loops, and $n$ leaves marked by $a_1,\dots,a_n$.
The explicit value of the coefficient of this graph is, by definition,
\begin{equation}\label{GWpoint}
\langle\tau_{a_1}\dots\tau_{a_n}\rangle_g:=\int_{\oM_{g,n}}
\psi_1^{a_1}\dots\psi_{n}^{a_n}.
\end{equation}
So, in the case of trivial cyclic Hodge algebra we obtain exactly the Gromov-Witten
potential of the point (i.~e., $\left(\Omega_{g,n},e_1^{\otimes n}
\right)\equiv 1)$) that we denote below by $F^{pt}$.

\subsubsection{Remark about notations}

Abusing notation, we use the same symbol $\langle~\rangle_g$ for the 
correlators in GW theory and in Hodge field theory. We hope that it 
does not lead to a confusion. For instance, 
$
\langle\tau_{a_1}(E_{a_1})\dots\tau_{a_n}(E_{a_n})\rangle_g
$
in the trivial example above is the correlator of the trivial Hodge field theory, while 
$
\langle\tau_{a_1}\dots\tau_{a_n}\rangle_g
$
is the correlator of the trivial GW theory.

%%%%%%%%%%%%%%%%%%%%%%%%%%%%%%%%%%%%%%%%%%%%%%%%%%%%%%%%%%
%%%%%%%%%%%%%%%%%%%%%%%%%%%%%%%%%%%%%%%%%%%%%%%%%%%%%%%%%%
%%%%%%%%%%%%%%%%%%%%%%%%%%%%%%%%%%%%%%%%%%%%%%%%%%%%%%%%%%
%%%%%%%%%%%%%%%%%%%%%%%%%%%%%%%%%%%%%%%%%%%%%%%%%%%%%%%%%%
%%%%%%%%%%%%%%%%%%%%%%%%%%%%%%%%%%%%%%%%%%%%%%%%%%%%%%%%%%

\section{String, dilaton, and tautological relations}\label{sec-proper}

In this section, we prove that the potential~\eqref{potential} satisfies the
same string and dilaton equations as GW potentials.

\subsection{String equation}

\begin{theorem} If $\sum_{j=1}^n a_j >0$, we have:
\begin{equation}\label{str}
\langle \tau_0(e_1)\prod_{j=1}^n\tau_{a_j}(e_{i_j})\rangle_g
=\sum_{j=1}^n \langle \tau_{a_j-1}(e_{i_j})\prod_{k\not=j} \tau_{a_k}(e_{i_k})\rangle_g;
\end{equation}
\end{theorem}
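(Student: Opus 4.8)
The plan is to prove the string equation by a purely combinatorial manipulation on the sum over graphs defining the left-hand side correlator, mirroring the geometric proof on the moduli space but carried out entirely within the cyclic Hodge algebra formalism. The key structural fact I will exploit is that the leaf marked $\tau_0(e_1)$ carries the unit of $H$ (since $e_1$ is the unit and $E_0$ specialized to $e_1$ is just the unit), and attaching the unit at a trivalent-or-higher vertex interacts with the integral form $(a_1,\dots,a_n)\mapsto\int a_1\cdots a_n$ in a controlled way.

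First I would fix a graph $\Gamma$ contributing to $\langle\tau_0(e_1)\prod_j\tau_{a_j}(e_{i_j})\rangle_g$ and locate the special leaf $\ell_0$ carrying $\tau_0(e_1)$. This leaf is attached to some vertex $v$. Because the vector placed at $\ell_0$ is the unit, the contraction at $v$ effectively reduces the $n$-form at $v$ to an $(n-1)$-form: inserting the unit into $\int a_1\cdots a_{n-1}\cdot 1 = \int a_1\cdots a_{n-1}$ removes one argument. I expect the combinatorial factor $V(\Gamma)P(\Gamma)$ to reorganize so that the geometric identity $\pi^*\psi_j=\psi_j-D_j$ is reflected at the level of the factor $P(\Gamma)=\prod_v\int_{\oM_{g(v),m(v)+l(v)}}\prod\psi^{a(v)}$: forgetting the point $\ell_0$ on the component containing it corresponds to the string equation for the point intersection numbers~\eqref{GWpoint}, namely $\langle\tau_0\prod\tau_{a_j}\rangle=\sum_j\langle\tau_{a_j-1}\prod_{k\neq j}\tau_{a_k}\rangle$ on each vertex factor. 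The effect is to lower exactly one descendant index $a_j$ by one, summed over all choices of $j$, which is precisely the right-hand side of~\eqref{str}.

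The main step is therefore to match, term by term, the string equation applied to the $P(\Gamma)$-factor of each vertex against the claimed sum on the right. Concretely, I would group graphs on the right-hand side by the graph $\Gamma'$ obtained from $\Gamma$ after deleting the leaf $\ell_0$; every such $\Gamma'$ is a valid graph for $\langle\tau_{a_j-1}(e_{i_j})\prod_{k\neq j}\tau_{a_k}(e_{i_k})\rangle_g$, and I must check that the weights $V,P,T$ transform consistently. Since attaching $\ell_0$ does not change the heavy-edge count or the genus assignment, $V(\Gamma)$ and $T(\Gamma)$ are essentially unchanged (the tensor $T$ only loses the trivial contraction with the unit), so the entire content is carried by the string equation for $P$ at the affected vertex. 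The hypothesis $\sum_j a_j>0$ guarantees we are never forced to forget a point on an unstable component, matching the stability restriction $2g-2+n>0$ that makes the forgetful maps and $\psi$-pullback valid.

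The part I expect to be the main obstacle is the careful bookkeeping of the combinatorial coefficient $V(\Gamma)=\prod_v 2^{g(v)}g(v)!/|\mathrm{aut}(\Gamma)|$ under removal of the leaf: one must verify that automorphisms are neither created nor destroyed in a way that spoils the bijection between the summand graphs, and that the distribution of the index-lowering over the $n$ leaves on the right is accounted for without overcounting. I anticipate handling this by reducing to the classical string equation for the intersection numbers~\eqref{GWpoint} vertex by vertex — which is exactly the statement of the string equation for the Gromov-Witten potential $F^{pt}$ of the point — and then checking that summing the local contributions reproduces the global sum $\sum_{j=1}^n$. The signs from the $\Z_2$-grading, per the remark in Section~\ref{usage}, should not interfere, as the unit $e_1$ is even; any residual signs I would absorb into the lemmas of~\cite{ls,s} as the authors suggest.
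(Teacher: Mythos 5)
Your overall strategy coincides with the paper's proof: remove the unit leaf, apply the string equation of the point (Witten--Kontsevich) theory~\eqref{GWpoint} to the $P$-factor of the vertex $v$ carrying that leaf, check that $T$ and $V$ are unchanged, and verify that each right-hand-side graph is used exactly once. However, there is a genuine gap at the step you dismiss via stability. The point string equation at the vertex $v$ is valid only under the \emph{local} condition that the leaves attached to $v$ satisfy $\sum_{j\in I_v}a_j>0$; the \emph{global} hypothesis $\sum_{j=1}^n a_j>0$ does not imply this, because the leaves with positive descendant indices may be attached to other vertices of $\Gamma$. Concretely, the unit leaf can sit at a vertex whose other attachments are heavy half-edges and/or leaves all carrying $\tau_0$. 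The dimension constraint making the vertex factor of $P(\Gamma)$ nonzero then forces $g_v=0$ and $m_v+l_v=2$, i.e.\ an $\oM_{0,3}$ vertex --- which is perfectly stable, so no ``forgetting a point on an unstable component'' occurs --- and yet the point string equation fails there: $\langle\tau_0^3\rangle_0=1$ while the corresponding right-hand side is empty. For such graphs your term-by-term matching produces a left-hand-side contribution with no right-hand-side counterpart.

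The paper closes exactly this case by an algebraic vanishing argument absent from your proposal: in the three possible local pictures at such a vertex, either there are two heavy half-edges, and the contraction through the unit produces the bivector $[G_-G_+G_-G_+]=0$ (using $G_-^2=0$ and the anticommutation of $G_-$ with $G_+$); or there is one heavy half-edge and one primary leaf, giving $G_-G_+(e_1e_{i_j})=0$ because the product lies in $H_0$ and $G_+H_0=0$; or there are two primary leaves, in which case the vertex is the entire graph and this contradicts the global hypothesis $\sum_j a_j>0$. Only after showing that $T(\Gamma)=0$ in the first two configurations does the identity $V(\Gamma)P(\Gamma)T(\Gamma)=\sum_{j\in I_v}V(\Gamma_j)P(\Gamma_j)T(\Gamma_j)$ hold for \emph{every} contributing graph, which is what the summation over graphs requires. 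You should add this case analysis; the rest of your bookkeeping (that $T(\Gamma)=T(\Gamma_j)$ since the unit contributes trivially to the contraction, that $V(\Gamma)=V(\Gamma_j)$ since the unit leaf and $v$ are fixed by every automorphism, and that the index-lowering sum is accounted for by the vertex-local string equation) is correct and matches the paper.
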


\begin{proof}

Consider a graph $\Gamma$ contributing to the correlator on the left hand side of the
string equation. The special leaf that we are going to remove is marked by
$\tau_0(e_1)$ and is attached to a vertex $v$ of genus $g_v$ (i.~e., with $g_v$
attached light loops) with $l_v$ more attached leaves labeled by indices in $I_v$,
$|I_v|=l_v$,
and $m_v$ attached half-edges coming from heavy edges and loops.

Let us remove the leaf $\tau_0(e_1)$ and change the label of one of the leaves attached to
the same vertex from $\tau_{a_j}(e_{i_j})$ to $\tau_{a_j-1}(e_{i_j})$. This way
we obtain a graph $\Gamma_j$ contributing to the $j$-th summand of the right hand side
of~\eqref{str}. We take the sum of these graphs over $j\in I_v$. Of course, we
skip the summands where $a_j=0$.

Note that this sum is not empty (if $\Gamma$ gives a non-zero contribution to the
left hand side of~\eqref{str}). Indeed, if it is 
empty, this means that $a_j=0$ for all $j\in I_v$. Therefore, since we expect that the 
contribution to $P(\Gamma)$ of the vertex $v$ on the left hand side is nonzero, it 
follows that $g_v=0$ and $m_v+l_v=2$. So, there are three possible local pictures:
\begin{equation}
\inspic{arb.10},\quad\inspic{arb.11},\quad\mbox{and}\quad\inspic{arb.12}.
\end{equation}
The first picture can be replaced with the bivector $[G_-G_+G_-G_+]$, which is
equal to zero. Therefore, $T(\Gamma)$ is also equal to zero.  In the second case, 
we also get $0$ since $G_-G_+(e_1e_j)=G_-G_+(E_0)=0$. The third picture 
is possible only when it is the whole graph, and this is in contradiction with
the assumption that $\sum_{j=1}^n a_j >0$.

Note also that $T(\Gamma)=T(\Gamma_j)$ and $V(\Gamma)=V(\Gamma_j)$ for all $j$. 
Indeed, we have just removed the leaf with the unit of the algebra, so this
can't change anything in the contraction of tensors. Therefore, $T(\Gamma)=
T(\Gamma_j)$. Also both the leaf $\tau_0(e_1)$ and the vertex $v$ 
are the fixed points of any automorphism of $\Gamma$. The same is for 
the vertex corresponding to $v$ in $\Gamma_j$. Therefore, the automorphism groups 
are isomorphic for both graphs. Since we make no changes for empty loops, it
follows that $V(\Gamma)=V(\Gamma_j)$.

Let us prove that $P(\Gamma)=\sum_{j\in I_v}P(\Gamma_j)$. Indeed, the
vertices of $\Gamma$ and $\Gamma_j$ are in a natural one-to-one correspondence. 
Moreover, the local pictures for all of them except for $v$ and its image in
$\Gamma_j$ are the same. Therefore, the corresponding intersection numbers
contributing to $P(\Gamma)$ and $P(\Gamma_j)$ are the same. The unique
difference appeares when we take the intersection numbers corresponding to 
$v$ and its images in $\Gamma_j$, $j\in I_v$. But then we can apply the string
equation~\eqref{string} of the GW theory of the point~\eqref{GWpoint}, and we see that
\begin{equation}
\int_{\oM_{g_v,k_v+l_v+1}}\prod_{j\in I_v}\psi_{o(j)}^{a_j}
=
\sum_{j\in I_v}
\int_{\oM_{g_v,k_v+l_v}}\psi_{o(j)}^{a_j-1}\prod_{k\not= j}\psi_{o(k)}^{a_k}
\end{equation}
(here $o\colon I_v\to \{1,\dots,l\}$ is an arbitrary on-to-one mapping). This
implies that $P(\Gamma)=\sum_{j\in I_v}P(\Gamma_j)$.

So, we have $V(\Gamma)P(\Gamma)T(\Gamma)=
\sum_{j\in I_v}V(\Gamma_j)P(\Gamma_j)T(\Gamma_j)$.
In order to complete the proof of~\eqref{str}, we should just notice that when we
write down this expression for all graphs contributing to the left hand side 
of~\eqref{str}, we use each graph contributing to the right hand side of~\eqref{str} 
exactly once.

\end{proof}

\subsection{Dilaton equation}

\begin{theorem} If $2g-2+n>0$, we have:
\begin{equation}\label{dil}
\langle \tau_1(e_1)\prod_{j=1}^n\tau_{a_j}(e_{i_j})\rangle_g
=(2g-2+n)\langle \prod_{j=1}^n \tau_{a_j}(e_{i_j})\rangle_g;
\end{equation}
\end{theorem}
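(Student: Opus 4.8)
The plan is to run the argument of the string equation in reverse. I would start from a graph $\Gamma$ contributing to the left-hand side of~\eqref{dil}, whose distinguished leaf is marked $\tau_1(e_1)$ and is attached to a vertex $v$ of type $(g(v),m(v);\dots)$ with $l(v)$ further leaves, and I would erase this leaf to obtain a graph $\Gamma'$ contributing to the right-hand side. Since $e_1$ is the unit of $H$, deleting the leaf multiplies the form at $v$ by the unit and changes nothing in the contraction, so $T(\Gamma)=T(\Gamma')$. The whole effect of the distinguished leaf is thus concentrated in the intersection number $P$ at $v$: the leaf carries one $\psi$-power, so the local factor passes from $\int_{\oM_{g(v),m(v)+l(v)}}\prod\psi^{a}$ to $\int_{\oM_{g(v),m(v)+l(v)+1}}\psi_{\mathrm{new}}^{1}\prod\psi^{a}$. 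By the dilaton equation for the Gromov-Witten theory of the point~\eqref{GWpoint} this equals $(2g(v)-2+m(v)+l(v))$ times the factor appearing in $P(\Gamma')$, whence $P(\Gamma)=(2g(v)-2+m(v)+l(v))\,P(\Gamma')$.

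Next I would dispose of the graphs for which erasing the leaf destabilizes $v$, i.e. for which $2g(v)-2+m(v)+l(v)=0$. There are two types. If $g(v)=0$ and $m(v)+l(v)=2$, the local factor is $\int_{\oM_{0,3}}\psi_{\mathrm{new}}^{1}\cdots$, which vanishes because $\psi^1$ has positive degree on the zero-dimensional space $\oM_{0,3}$; hence $P(\Gamma)=0$ and such graphs contribute nothing, independently of the hypothesis. If $g(v)=1$ and $m(v)+l(v)=0$, then $v$ has no heavy half-edges and no other leaves, so it is an isolated component and $\Gamma$ is this single vertex with $g=1$, $n=0$; this is excluded by the assumption $2g-2+n>0$. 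Therefore every contributing graph on the left has $v$ stable after removal, and the assignment $\Gamma\mapsto\Gamma'$ is well defined on contributing graphs.

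It then remains to reassemble the combinatorial weights. Because the distinguished leaf is fixed by every automorphism of $\Gamma$, the group $\mathrm{aut}(\Gamma)$ is precisely the stabiliser of $v$ inside $\mathrm{aut}(\Gamma')$; as the genus labels and empty loops are untouched, orbit-stabiliser gives $V(\Gamma)=|O|\cdot V(\Gamma')$, where $O$ is the $\mathrm{aut}(\Gamma')$-orbit of $v$. For a fixed right-hand side graph $\Gamma'$ the contributing left-hand side graphs are indexed by the orbits of its vertices, and summing $V(\Gamma)P(\Gamma)T(\Gamma)$ over these orbits yields $V(\Gamma')P(\Gamma')T(\Gamma')$ times $\sum_{O}|O|\,(2g(v_O)-2+m(v_O)+l(v_O))=\sum_{v\in Vert(\Gamma')}(2g(v)-2+m(v)+l(v))$, the quantity being constant on orbits.

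The heart of the matter is the global identity $\sum_{v\in Vert(\Gamma')}(2g(v)-2+m(v)+l(v))=2g-2+n$. I would establish it from the bookkeeping $\sum_v m(v)=2E$ (each heavy edge has two half-edges), $\sum_v l(v)=n$ (the external leaves), and $\sum_v g(v)=g-h^{1}$, where $E$ is the number of heavy edges and $h^{1}=E-|Vert(\Gamma')|+1$ is the first Betti number of the connected heavy-edge graph; substituting these makes the left-hand side collapse to $2g-2+n$. Combined with the previous paragraph, this gives that the contributions over all $\Gamma$ mapping to a fixed $\Gamma'$ sum to $(2g-2+n)\,V(\Gamma')P(\Gamma')T(\Gamma')$, and summing over $\Gamma'$ proves~\eqref{dil}. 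I expect this genus identity, together with the automorphism bookkeeping, to be the conceptual core, whereas the vanishing of the destabilizing contributions is a routine degree count.
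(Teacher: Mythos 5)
Your proof is correct and takes essentially the same route as the paper's: delete the $\tau_1(e_1)$ leaf to obtain $\Gamma'$, observe $T(\Gamma)=T(\Gamma')$ because $e_1$ is the unit, apply the dilaton equation of the point theory at the vertex $v$ to get the local factor $2g(v)-2+m(v)+l(v)$, rule out the destabilizing local pictures (the genus-zero trivalent one vanishes by the degree count on $\oM_{0,3}$, the genus-one tadpole is excluded by $2g-2+n>0$), and sum the local factors over vertices to $2g-2+n$. The only divergence is the automorphism bookkeeping: the paper asserts $V(\Gamma)=V(\Gamma')$ and counts each $\Gamma'$ with multiplicity $|Vert(\Gamma')|$, whereas you use orbit--stabilizer to get $V(\Gamma)=|O|\cdot V(\Gamma')$ with one $\Gamma$ per vertex orbit --- the two accountings compensate to the same total, yours being the sharper statement (the paper's $V(\Gamma)=V(\Gamma')$ can literally fail when $v$ carries no other leaves and lies in a nontrivial orbit), and you additionally prove the genus identity $\sum_{v}(2g(v)-2+m(v)+l(v))=2g-2+n$ that the paper only states.
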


\begin{proof}

Consider a graph $\Gamma$ contributing to the correlator on the left hand side
of~\eqref{dil}. The special leaf that we are going to remove is marked by
$\tau_1(e_1)$ and is attached to a vertex $v$ of genus $g_v$ (i.~e., with $g_v$
attached light loops) with $l_v$ more attached leaves labeled by indices in $I_v$,
$|I_v|=l_v$, and $m_v$ attached half-edges coming from heavy edges and loops.

Let us remove the leaf $\tau_1(e_1)$. We obtain a graph $\Gamma'$ contributing 
to the right hand side of~\eqref{str}. Let us prove this. Indeed, if we remove a
leaf and don't get a proper graph, it follows that we have a trivalent
vertex. Since the contribution of this vertex to $P(\Gamma)$ should be non-zero, 
it follows that the unique possible local picture is 
\begin{equation}
\inspic{arb.13}.
\end{equation}
But this picture is the whole graph, and it is in contradiction with the
condition $2g-2+n>0$.

The same argument as in the proof of the string equation shows that 
$T(\Gamma)=T(\Gamma')$ and $V(\Gamma)=V(\Gamma')$. Also, the contribution to
$P(\Gamma)$ and $P(\Gamma')$ of all vertices except for the changed one is the 
same. The change of the intersection number corresponding to the vertex $v$ is
captured by the dilaton equation~\eqref{string} of the trivial GW
theory~\eqref{GWpoint}:
\begin{equation}
\int_{\oM_{g_v,k_v+l_v+1}}\psi_{l+1}\prod_{j\in I_v}\psi_{o(j)}^{a_j}
=
(2g_v-2+k_v+l_v)\int_{\oM_{g_v,k_v+l_v}}\prod_{j\in I_v}\psi_{o(j)}^{a_j}
\end{equation}
(again, $o\colon I_v\to \{1,\dots,l\}$ is an arbitrary on-to-one mapping). 
This implies that $P(\Gamma)=(2g_v-2+k_v+l_v)P(\Gamma')$, and, therefore,
\begin{equation}
V(\Gamma)P(\Gamma)T(\Gamma)=(2g_v-2+k_v+l_v)V(\Gamma')P(\Gamma')T(\Gamma').
\end{equation}

Let us write down the last equation for all graphs $\Gamma$ contributing to the
left hand side~\eqref{dil}. Observe that any graph $\Gamma'$ contributing to the
right hand side occurs $|Vert(\Gamma')|$ times, since the leaf $\tau_1(e_1)$
could be attached to any its vertex. Therefore, any graph $\Gamma'$ contributing to the
right hand side of~\eqref{dil} appears in these equations with the coefficient
\begin{equation}
\sum_{v\in Vert(\Gamma')}\left(2g_v-2+k_v+l_v\right)=2g-2+n.
\end{equation}
This completes the proof.

\end{proof}

\subsection{Tautological equations}

As we have explained in Section~\ref{rel-for-corr}, any linear relation $L$ among 
$\psi$-$\kappa$-strata in the cohomology of
the moduli space of curves gives rise to a family of universal relations for the correlators 
of a Gromov-Witten theory.

\begin{theorem}[Main Theorem] \label{thm-tautolog} 
The system of universal relations coming from a tautological relation in the
cohomology of the moduli space of curves holds for the correlators $\langle
\prod_{j=1}^n\tau_{a_j}(e_{i_j})\rangle_g$ of cyclic Hodge algebra.
\end{theorem}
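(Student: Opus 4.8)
The plan is to show that the Hodge field theory correlators, defined as sums over graphs weighted by $V(\Gamma)P(\Gamma)T(\Gamma)$, literally compute integrals against $\Omega^{ind}_{g,n}$ (the degree-zero induced Gromov-Witten invariants of Section~\ref{sec-Z}) of the corresponding $\psi$-$\kappa$-strata. Concretely, each graph $\Gamma$ in the correlator is a decorated dual graph of a boundary stratum in $\oM_{g,n}$: vertices carry the induced invariants, heavy edges $[G_-G_+]$ encode the gluing bivectors at the nodes, and the factor $P(\Gamma)$ records the $\psi$-class integrals on the components via the Witten--Kontsevich recursion. The key conceptual point, already indicated in Section~\ref{sec-Z}, is that $V(\Gamma)P(\Gamma)T(\Gamma)$ is exactly the intersection number of $\prod_i \psi_i^{k_i}$ with the stratum dual to $\Gamma$, paired against the induced invariants. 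Thus the entire correlator equals the Gromov-Witten-style integral $\int_{\oM_{g,n}}(\Omega^{ind}_{g,n}\prod_j\psi_j^{a_j}, \bigotimes_j e_{i_j})$.

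Granting this dictionary, the theorem becomes nearly formal. A tautological relation $L=0$ in the cohomology of $\oM_{g,n}$ is a linear combination of $\psi$-$\kappa$-strata that vanishes as a cohomology class. Since the induced form $\Omega^{ind}_{g,n}$ is $(d+Q)$-closed and its $H_0$-valued integrals define genuine Gromov-Witten invariants (as established in the induced Gromov-Witten theory subsection), integrating $(\Omega^{ind}_{g,n},\bigotimes_j e_{i_j})$ against $L$ gives zero, exactly as in Section~\ref{rel-for-corr}. Unwinding each stratum in $L$ through the dictionary converts this geometric vanishing into the claimed universal relation among the algebraic correlators $\langle\prod_j\tau_{a_j}(e_{i_j})\rangle_g$. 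The pullback-and-multiply-by-$\psi$ operation used to generate the whole system of PDEs corresponds on the graph side to adding leaves and reweighting, which the same dictionary handles uniformly.

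I expect the main obstacle to be making the dictionary fully rigorous at the purely algebraic level, \emph{without} invoking the existence of Zwiebach invariants. The paper's whole point is that the correlators are defined by graph sums using only cyclic Hodge algebra data, so the proof cannot legitimately assume $\Omega^{ind}_{g,n}$ exists as an actual differential form; the geometric picture is only motivational. Therefore the honest proof must match, combinatorially, the expansion of a $\psi$-$\kappa$-stratum in terms of correlators (as in Section~\ref{integrals-strata}) against the graph-sum definition. The delicate steps are: first, verifying that the splitting of a stratum into vertex-contributions and node-bivectors $[\eta^{-1}]$ agrees with the insertion of $[G_-G_+]$ on heavy edges — this requires that the ``propagator'' of the theory reproduces the factorization axiom; second, checking that the combinatorial factor $V(\Gamma)$ correctly accounts for automorphisms and for the $2^{g(v)}g(v)!$ arising from the genus loops; and third, confirming that $P(\Gamma)$, built from Witten--Kontsevich integrals, reproduces the self-intersection and $\kappa$-class contributions via the forgetful-map relations $\kappa_{k_1,\dots,k_l}=\pi_*(\prod\psi_{n+j}^{k_j+1})$.

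The safest route, and the one I would pursue, is to reduce everything to the trivial cyclic Hodge algebra. For the point, the correlators are precisely $\int_{\oM_{g,n}}\psi_1^{a_1}\cdots\psi_n^{a_n}$, and there the validity of tautological relations is a known geometric fact. The task is then to show that the graph-sum structure of a general cyclic Hodge algebra is ``built over'' this point theory: each vertex contributes a point-correlator $P(\Gamma)$ tensored with a Hodge-algebra contraction $T(\Gamma)$, and the tautological relation acts only on the $P$-factors, vertex by vertex, exactly as it acts in the point theory. Since a tautological relation localizes to relations among the $\psi$-$\kappa$-integrals on the moduli spaces $\oM_{g(v),\,m(v)+l(v)}$ attached to individual vertices, and since the $T(\Gamma)$-factors and the propagators $[G_-G_+]$ are inert under these relations, the vanishing for the point propagates to the vanishing for an arbitrary cyclic Hodge algebra. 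Establishing this vertex-local factorization rigorously — in particular that summing over all ways of distributing the relation across the graph reassembles the full universal relation — is the technical heart of the argument.
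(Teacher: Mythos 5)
There is a genuine gap, and it sits exactly where you place ``the technical heart'' of your argument: the claimed vertex-local factorization is not how tautological relations interact with the graph sums, and the mechanism that actually makes the theorem true is absent from your proposal. The universal relation attached to $L=\sum_i c_iX_i=0$ expresses each stratum $X_i$ as a product of correlators joined at the nodes by the scalar product \emph{restricted to $H_0$}, i.e., by edges carrying the bivector $[\Pi_0]$. These white edges are not among the tensors occurring in the graph-sum definition of the correlators (which involves only $[G_-G_+]$ and $[Id]$ edges), so the relation does not ``act only on the $P$-factors, vertex by vertex,'' and the propagators are anything but inert: the entire difficulty is to reconcile $[\Pi_0]$ with the heavy-edge structure. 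The paper resolves this with the Main Lemma (Lemma~\ref{main-lemma}, and its corollary Lemma~\ref{cor-main-l}), the purely algebraic identity $Q+\psi G_-=0$ at the level of correlators, whose proof uses the Leibniz rule, $[Q,G_-G_+]=-G_-$, the $7$-term relation, the $1/12$-axiom, and the vanishing results (Lemmas~\ref{tillmann} and~\ref{lemma-emptyedges}) forcing all vertices to have genus $\leq 1$. Writing $\Pi_0=\mathrm{Id}-QG_+-G_+Q$ and applying Lemma~\ref{cor-main-l} converts every white edge into $[Id]-\psi[G_-G_+]-[G_-G_+]\psi$, which turns the universal expression for $X_i$ into a sum of ordinary Hodge field theory graphs; Propositions~\ref{prop1} and~\ref{prop2} then identify the resulting coefficients as intersection numbers $X_i\cdot Y_j$ against boundary strata $Y_j$ of complementary dimension with genus $0$ and $1$ components only, so $L=0$ kills every total coefficient $\sum_i c_i\left(X_i\cdot Y_j\right)$. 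Your argument never invokes the $7$-term relation, the $1/12$-axiom, or the Hodge decomposition; since already the WDVV case (the Barannikov--Kontsevich statement) genuinely requires the dGBV structure, no argument of the shape you propose --- reduction to the point theory with ``inert'' propagators --- can be complete.

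A secondary but important error: the assertion that a tautological relation ``localizes to relations among the $\psi$-$\kappa$-integrals on the moduli spaces $\oM_{g(v),m(v)+l(v)}$ attached to individual vertices'' is false. The strata occurring in $L$ have different dual-graph topologies (already for WDVV), so there is no common vertex structure across which the relation could be distributed; what replaces this in the paper is the intersection-theoretic statement of Proposition~\ref{prop1}, which lives on $\oM_{g,n}$ itself, not on the vertex moduli spaces. You are right that your first paragraph's dictionary (correlator equals $\int\left(\Omega^{ind}_{g,n}\prod_j\psi_j^{a_j},\bigotimes_j e_{i_j}\right)$) is only motivational and cannot be assumed; but having correctly set it aside, you are left with no substitute for the Main Lemma and the white-edge elimination procedure, which constitute the actual content of the proof.
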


Note that some special cases of this theorem were proved in~\cite{ls,s,ss}. Our
argument below is a natural generalization of the technique introduced in these
papers. Also we are able now to give an explanation why we have managed to perform 
all our calculations there, see Remark~\ref{last-remark}.

Let us give here a brief account of the proof of this theorem. First, the
definition of correlators of the Hodge field theory can be extended to the
intersection with an arbitrary tautological class $\alpha$ of degree $K$ in the space
$\oM_{g,n}$, not only the a monomial in
$\psi$-classes. In that case, we do the following. Again, we consider the sum
over all graphs $\Gamma$ with $3g-3+n-K$ heavy edges, and the number $T(\Gamma)$
is defined as above. Instead of the coefficient $V(\Gamma)P(\Gamma)$ we use
the intersection number of $\alpha$ with the stratum
whose dual graph is obtained from $\Gamma$ by the procedure described right 
after the definition of $V(\Gamma)$. Namely, a vertex with $g$ loops is replaced
by a vertex marked by $g$. 

This definition is very natural from the point of view of Zwiebach invariants.
However, we know from Gromov-Witten theory that this extension of the notion of
correlator is unnecessary. Indeed, all integrals with arbitrary tautological
classes can be expressed in terms of the integrals with only $\psi$-classes via
some universal formulas. 

The main question is whether these universal formulas also work in Hodge field
theory. Actually, the main result that more or less immediately proves the
theorem is the positive answer to this question. 

\subsubsection{Organization of the proof}

The rest of the paper is devoted to the proof of Main Theorem, and here we would like to overview it here. 

In Section 6, we study the structure of graphs that can appear in formulas for the correlators of Hodge field theory. We prove that if $T(\Gamma)\not=0$ and there is at least one heavy edge in $\Gamma$, then all vertices have genus $\leq 1$, i.~e., there is at most one empty loop at any vertex. This basically means that in calculations we'll have to deal only with genera $0$ and $1$. Also this allows us to write down the action of a Hodge field theory.

In Section 7, we prove the main technical result (Main Lemma). Informally, it states that $Q=-G_-\psi$ when we apply these two operators to the correlators of a Hodge field theory. In order to prove it, we look at a small piece (consisting just of one heavy edge and one or two vertices that are attached to it) in one of the graphs of a correlator. Of course, in the correlator we can vary this small piece in an arbitrary way, such that the rest of the graph remains the same. So, when we consider the sum of all these small pieces, it is also a correlator of the Hodge field theory. Thus we reduce the proof to a special case of the whole statement. But since the genus of a vertex is $\leq 1$, it appear now to be a low-genera statement that can be done by a straightforward calculation.

In Section 8, we present the proof of the Main Theorem. Consider a $\psi$-$\kappa$-stratum $\alpha$ whose stable dual graph has $k\geq 1$ edges. There is a universal expression of the integral over $\alpha$ coming from Gromov-Witten theory. It includes $k$ entries of the scalar product restricted to $H_0$. In terms of graphs, it means that we are to introduce new edges with the bivector $[\Pi_0]$ on them, and there are $k$ such edges in our expression. A direct corollary of the Main Lemma is that we can always replace $[\Pi_0]$ by $[Id]-\psi [G_-G_+]-[G_-G_+]\psi$. 

In Sections 8.2, 8.3, and 8.4, we show that when we replace $[\Pi_0]$ by $[Id]-\psi [G_-G_+]-[G_-G_+]\psi$ at all edges corresponding to the scalar product restricted to $H_0$, we obtain a new expression for the integral over $\alpha$ that again contains only heavy edges and empty loops, as any ordinary correlator. The main problem now is to understand the combinatorial coefficient of a graph $\Gamma$ obtained this way. 

Since we have a sum over graphs with heavy edges and empty loops, it is natural to identify again these graphs with the corresponding strata in the moduli space of curves. Then we can calculate the intersection index of the stratum corresponding to a graph $\Gamma$ and the initial class $\alpha$. Roughly speaking, the main thing that we have to do is to decide about each node in $\alpha$ (represented initially by $[\Pi_0]$), whether we have this node in stratum corresponding to $\Gamma$. If yes, then we have an excessive intersection (so, we must put $-\psi$ on one of the half-edges of the corresponding edge), and we keep this edge in $\Gamma$ (so, we replace $[\Pi_0]$ with $-\psi[G_-G_+]$ or $-[G_-G_+]\psi$). If no, then we don't have this edge in $\Gamma$, so we contract $[\Pi_0]$, i.~e., replace it with $[Id]$.

So, the procedure that we used to get rid of the scalar product is the same as the procedure of the intersection of $\alpha$ with strata of the complementary dimension. This means (Section 8.5) that the universal formula coming from Gromov-Witten theory is equivalent to the natural formula for the ``correlator with $\alpha$'' coming from Zwiebach theory. The tautological relation is a sum of classes equal to zero. So, while the universal formula coming from Gromov-Witten theory gives (in the case of a vanishing class) a non-trivial expression in correlators, the natural formula coming from Zwiebach theory gives identically zero. This proves our theorem, see Section 8.6.

%%%%%%%%%%%%%%%%%%%%%%%%%%%%%%%%%%%%%%%%%%%%%%%%%%%%%%%%%%
%%%%%%%%%%%%%%%%%%%%%%%%%%%%%%%%%%%%%%%%%%%%%%%%%%%%%%%%%%
%%%%%%%%%%%%%%%%%%%%%%%%%%%%%%%%%%%%%%%%%%%%%%%%%%%%%%%%%%
%%%%%%%%%%%%%%%%%%%%%%%%%%%%%%%%%%%%%%%%%%%%%%%%%%%%%%%%%%
%%%%%%%%%%%%%%%%%%%%%%%%%%%%%%%%%%%%%%%%%%%%%%%%%%%%%%%%%%

\section{Vanishing of the BV structure}

In this section, we recall several useful lemmas shared in~\cite{t,s}. In
particular, these lemmas give some strong restrictions on graphs that can give a
non-zero contirbution to the correlators defined above.

\subsection{Lemmas}

\begin{lemma}\label{tillmann}~\cite{t,s} The following vectors and bivectors are equal to zero:
\begin{equation}
\inspic{vanish.1}, \quad \inspic{vanish.2}, \quad \inspic{vanish.3}, \quad
\inspic{vanish.4}.
\end{equation}
\end{lemma}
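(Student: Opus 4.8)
The plan is to reduce the lemma to a short list of operator identities. Each of the four pictured objects is a single leaf or a single edge decorated by the operators $Q$, $G_-$, $G_+$; contracting it against the scalar product turns a vector-picture into an operator string $A_1\cdots A_k$ applied to a leaf vector $E_n\in H_0$, and turns a bivector-picture into the bivector $[A_1\cdots A_k]$ obtained from a composition of these operators. So it suffices to check that in each case the underlying operator composition is the zero operator, or that it annihilates $H_0$.

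First I would record the relations I intend to use, all of which are immediate from the definition of a cyclic Hodge algebra and of $G_+$. From $QH_0=G_-H_0=0$ and $G_+H_0=0$ we get that each of $Q$, $G_-$, $G_+$ kills any leaf vector $E_n$. From the explicit action of $G_+$ on the summands $\langle e_\alpha, Qe_\alpha, G_-e_\alpha, QG_-e_\alpha\rangle$ one checks directly that $G_+^2=0$, and $G_-^2=0$ is an axiom. Finally $[G_-,G_+]=0$ for the two odd operators is the anticommutation $G_-G_++G_+G_-=0$, whence $G_-G_+G_-G_+=-G_-^2G_+^2=0$. I would also note the projector identities $G_\pm\Pi_0=0$ and $\Pi_0 G_\pm=0$, which follow because $\mathrm{Im}\,\Pi_0=H_0$ is annihilated by $G_\pm$ while $\mathrm{Im}\,G_\pm\subseteq H_4$.

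With these in hand, the proof is a case check. For every vector-picture the operator string applied to $E_n$ has innermost factor one of $Q$, $G_-$, $G_+$, so the result is $0$ because $E_n\in H_0$. For every bivector-picture the relevant composition contains, after using $\{G_-,G_+\}=0$ to reorder, a nilpotent factor $G_-^2$, $G_+^2$, or the block $G_-G_+G_-G_+$, and hence equals the zero operator; the associated bivector $[0]$ is then zero. The only subtlety is the sign bookkeeping coming from the grading operator $J$ of Section~\ref{tensors} when an edge lies on a cycle, but this can only rescale a bivector and never affect whether it vanishes; so I expect the sole obstacle to be purely notational, namely correctly matching each picture to its ordered operator string. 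Since exactly these identities are those established in~\cite{t,s}, the work amounts to reproducing the one-line computations recorded above for each of the four pictures.
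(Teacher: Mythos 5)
Your proposal rests on a misreading of what the four pictures are. They are not single leaves or single edges decorated by operator strings in $Q$, $G_-$, $G_+$; they are one-vertex \emph{graphs containing loops}, i.e.\ supertrace-type tensors built out of the multiplication form $(a_1,\dots,a_n)\mapsto\int a_1\cdots a_n$, loop bivectors, and a $G_-$ decoration. You can see this from how the paper uses the lemma: in the proof of Lemma~\ref{lemma-emptyedges}, the third picture~\eqref{vanishing-bivector} is the two-form $\tilde\Phi$ extracted from $\Phi(2,0,1,1)$, a vertex with \emph{two empty loops}, one heavy half-edge and one leaf; explicitly $\tilde\Phi(u,y)=\sum_{\alpha,\beta}\pm\int x_\alpha x^\alpha x_\beta x^\beta\, G_-(u)\, y$, where the sums run over a basis of all of $H$. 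Similarly the vector pictures are of the shape $\sum_\alpha \pm\, G_-(x_\alpha x^\alpha)$, whose pairing with an arbitrary $c$ is $\pm\, str(m_{G_-(c)})$ (supertrace of a multiplication operator). Nothing here is applied to a leaf vector $E_n\in H_0$, and the "operator content" of, e.g., the third picture is a single $G_-$ --- there is no nilpotent block $G_-^2$, $G_+^2$, or $G_-G_+G_-G_+$ to exploit. So both of your case checks fail for the actual statement.

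The deeper point is that your toolkit (the Hodge decomposition identities $G_\pm H_0=0$, $G_\pm^2=0$, $\{G_-,G_+\}=0$, $\Pi_0 G_\pm=G_\pm\Pi_0=0$) never touches the product or the integral, whereas the lemma is precisely a statement about traces of multiplication operators composed with $G_-$, $G_+$. Its proof --- which this paper does not reproduce, saying only that both lemmas "are just simple corollaries of the axioms of cyclic Hodge algebra" and citing~\cite{t,s} --- genuinely requires the second-order ($7$-term) property of $G_-$ and the $1/12$-axiom; this is Tillmann's theorem on the vanishing of the Batalin--Vilkovisky structure (hence the section title and the attribution), and the coefficient $1/12$ is exactly what makes these traces cancel. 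Your operator identities are all correct, and some are indeed used elsewhere in the paper (e.g.\ $[G_-G_+G_-G_+]=0$ in the proof of the string equation), but they are stated there without any appeal to Lemma~\ref{tillmann}, precisely because they are trivial; the content of Lemma~\ref{tillmann} is the nontrivial part that your argument does not address.
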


Also let us remind another lemma in~\cite{s} that is very useful in
calculations.

\begin{lemma}\label{g-jump}~\cite{s}
For any vectors $V_0,V_1,\dots,V_k$, $k\geq 2$, 
\begin{align}
\inspic{vanish.5} & = \inspic{vanish.6}+\dots+\inspic{vanish.7}, \\
\inspic{vanish.8} & = \inspic{vanish.9}+\dots+\inspic{vanish.10}.
\end{align}
\end{lemma}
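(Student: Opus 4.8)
The plan is to prove both identities by induction on $k$, with the base case $k=2$ (three input vectors $V_0,V_1,V_2$) being exactly the $7$-term relation for the second-order operator $G_-$, i.e. the fourth axiom of a cyclic Hodge algebra. The first step is to translate each graphical side into an honest tensor expression. By the definition of the edge bivectors in Section~\ref{usage} together with the adjointness relations $\int G_-(a)b=(-1)^{\tilde a}\int aG_-(b)$ and $\int G_+(a)b=(-1)^{\tilde a}\int aG_+(b)$, a $[G_-]$-decorated half-edge meeting the distinguished input $V_0$ is the same, up to a parity sign, as applying $G_-$ to the product of the remaining inputs inside the integral. Thus I read the left-hand side of each identity as $\pm\int V_0\,G_-(V_1\cdots V_k)$ and the right-hand side as the sum of the $k$ derivation-type terms $\sum_{i}\pm\int V_0\,V_1\cdots G_-(V_i)\cdots V_k$, so that the content of the lemma is that the node-level $G_-$ may be distributed as a derivation onto the individual legs.

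For the inductive step I would peel off the last factor and write $G_-(V_1\cdots V_k)=G_-\bigl((V_1\cdots V_{k-1})\cdot V_k\bigr)$, then apply the $7$-term (second-order) expansion of $G_-$ on this product of two elements, and feed the shorter product $V_1\cdots V_{k-1}$ into the induction hypothesis. This produces the claimed sum of $k$ first-order terms together with a collection of second-order ``correction'' terms, each of which is a product in which a pairwise bracket $G_-(V_iV_j)-G_-(V_i)V_j-(-1)^{\tilde{V_i}}V_iG_-(V_j)$ replaces two of the legs. The whole point is that in the graphical context these corrections are not arbitrary algebra elements but sub-configurations in which a second $G_-$ (coming from the edge) collides with the bracket; such configurations are precisely the ones declared to vanish in Lemma~\ref{tillmann} (or collapse via $G_-^2=0$). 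Discarding them leaves exactly the clean sum of $k$ terms of the same shape that the statement asserts. The second identity is the mirror of the first, obtained by interchanging the two ends of the distinguished edge — equivalently, letting $G_-$ act after rather than before the multiplication — and I would deduce it from the first using the same adjointness relations together with $[G_-,G_+]=0$, instead of rerunning the induction.

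The main obstacle I anticipate is not the algebra of $G_-$, which is entirely dictated by the $7$-term relation and Lemma~\ref{tillmann}, but the Koszul sign bookkeeping in the $\Z_2$-graded setting: every time a vector or an operator is moved past another, a parity-dependent sign appears, and these must be reconciled with the sign conventions already built into the graphical calculus (the $J$-operator normalization of the Remark in Section~\ref{usage}). I would therefore carry out the induction with fully explicit signs only for $k=2$ and $k=3$, check that the telescoping of the correction terms is sign-consistent there, and then argue that the general pattern is forced, so that all signs agree with those already fixed in the lemmas imported from~\cite{ls,s}.
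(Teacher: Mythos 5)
The paper does not actually prove Lemma~\ref{g-jump}: it is imported from~\cite{s}, with the single remark that ``both lemmas are just simple corollaries of the axioms of cyclic Hodge algebra.'' So your proposal must stand on its own, and it does not, because its very first step --- the translation of the pictures into tensor algebra --- is already untenable, and the error propagates through the whole induction. You read the first identity as
\begin{equation*}
\pm\int V_0\, G_-(V_1\cdots V_k)\;=\;\sum_{i=1}^k \pm\int V_0\, V_1\cdots G_-(V_i)\cdots V_k
\end{equation*}
for \emph{arbitrary} $V_0,\dots,V_k\in H$, i.e.\ ``$G_-$ distributes as a derivation onto the legs under the integral.'' Since the pairing $(a,b)=\int ab$ is non-degenerate and $V_0$ is arbitrary, this is equivalent to $G_-(V_1\cdots V_k)=\sum_i\pm V_1\cdots G_-(V_i)\cdots V_k$ holding in $H$, i.e.\ to $G_-$ being a \emph{first-order} operator. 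For $k=2$ it says precisely that the odd bracket $\{V_1,V_2\}=G_-(V_1V_2)-G_-(V_1)V_2-(-1)^{\tilde V_1}V_1G_-(V_2)$ vanishes identically, which is false whenever $G_-$ is genuinely of second order (in the Barannikov--Kontsevich model this bracket is the Schouten bracket); indeed the paper's own genus-zero computation, Expression~\eqref{rewrite-1}, keeps the pair terms $G_-(v_iv_j)$ as genuinely new inputs --- they are what produce the descendant shift $\tau_{a_1+1}$ --- so they certainly are not negligible. Your base case is also not what you claim: integrating the $7$-term relation against a test vector yields nothing, because after applying $\int G_-(a)b=(-1)^{\tilde a}\int aG_-(b)$ and $\int G_-(\,\cdot\,)=0$ all six terms cancel pairwise and one is left with $0=0$; the $7$-term relation does not give the $k=2$ derivation property, so the induction has no base.

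The missing idea is that the identity of Lemma~\ref{g-jump} is true only because of the structure you stripped away: in the pictures, $G_-$ does not sit on a bare leg of a naked vertex, but on a half-edge adjacent to the decorations that actually occur in the graph calculus ($[G_+]$, $[G_-G_+]$, empty and heavy loops), and the argument of~\cite{s} uses exactly those decorations --- $G_-^2=0$ (so that a jumped $G_-$ annihilates a $G_-G_+$-end), the adjointness of $G_\pm$ with respect to $\int$, the full second-order expansion (the analogue of Equation~\eqref{7-term-corr}), and the $1/12$-axiom where loops are involved --- to kill or reassemble the bracket terms that your inductive step proposes to ``discard.'' Your appeal to Lemma~\ref{tillmann} for that purpose does not work: that lemma kills four specific decorated configurations (vertices carrying empty loops together with a $G_-$-decorated leg, and the like), not a generic insertion of $\{V_i,V_j\}$ at a vertex, and $G_-^2=0$ never applies to $G_-(V_iV_j)$ for arbitrary inputs. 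So the inductive step fails at its only substantive point. A correct proof must begin from a correct reading of the pictures and must make the decorations do the work; that is where the entire content of this ``simple corollary of the axioms'' lies.
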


Both lemmas are just simple corollaries of the axioms of cyclic Hodge algebra. 

\subsection{Structure of graphs}\label{action}

Consider a graph studied in Section~\ref{subsec-correlators}. It can have
leaves, empty and heavy loops, and heavy edges. Consider a vertex of such graph.
Let us assume that there are $A$ empty loops, $B$ heavy loops, $C$ heavy edges
going to the other vertices of the graph, and $D$ leaves attached to this
vertex:
\begin{equation}
\inspic{vanish.11}.
\end{equation}
This picture can be considered as an $C+D$ form. Let us denote it by
$\Phi(A,B,C,D)$.

\begin{lemma}\label{lemma-emptyedges}
If $A\geq 2$ and $B+C\geq 1$, then $\Phi(A,B,C,D)=0$.
\end{lemma}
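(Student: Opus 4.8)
The quantity $\Phi(A,B,C,D)$ is the tensor attached to a single vertex, carrying $A$ empty loops (each contributing the bivector $[Id]$), $B$ heavy loops and $C$ heavy half-edges (each heavy edge contributing $[G_-G_+]$), and $D$ leaves. As a $(C+D)$-form it is obtained from the vertex form $(a_1,\dots,a_n)\mapsto\int a_1\cdots a_n$ by substituting the bivectors $[Id]$ into the $A$ loop-slots and $[G_-G_+]$ into the $B$ loop-slots, leaving the $C$ heavy half-edges and $D$ leaves free. The claim is that if there are at least two empty loops ($A\geq 2$) and at least one further heavy insertion ($B+C\geq 1$), the whole expression vanishes.

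**The approach.** The plan is to isolate two of the empty loops and one heavy edge (or heavy loop) at the vertex and show that the corresponding local tensor contraction already vanishes, using Lemma~\ref{tillmann} and the fact that an empty loop is the bivector $[Id]$ associated with the identity operator. Concretely, an empty loop feeds a resolution of the identity $\sum_\mu v^\mu\otimes v_\mu$ (the bivector $[Id]$) into two slots of the multiplication form. Pushing both components of $[Id]$ through the associative product, a single empty loop at a vertex amounts to inserting the operator $a\mapsto\sum_\mu v^\mu\, a\, v_\mu$, i.e. a (super)trace-type insertion into the product. The key structural input is that a heavy half-edge carries the bivector $[G_-G_+]=[G_-G_+]$, and $G_-G_+$ lands in $\mathrm{Im}\,G_- \subset H_4$; meanwhile $\Pi=\{Q,G_+\}$ and the Hodge decomposition force $G_-G_+G_-G_+=0$ (which is exactly the mechanism invoked in the string-equation proof, where the local picture ``$[G_-G_+G_-G_+]$'' is declared zero).

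**Key steps in order.** First I would reduce to the minimal case $A=2$, $B+C=1$: since the extra empty loops ($A>2$) and the extra heavy/leaf insertions only ride along passively in the contraction, it suffices to kill the configuration with exactly two empty loops and exactly one heavy half-edge present. Second, I would rewrite the two empty loops: using supercommutativity and associativity of the product at the vertex, two insertions of $[Id]$ collapse, via the completeness relation, into a repeated trace over the full space $H=H_0\oplus H_4$. Third — this is the crucial algebraic point — I would use that the heavy half-edge forces a factor of $G_-G_+$ to appear inside this trace, and then apply Lemma~\ref{tillmann}, whose four vanishing diagrams encode precisely the identities of the form $G_-G_+(\cdot\,G_-G_+(\cdot))=0$ and the vanishing of the relevant vectors/bivectors built from $G_-,G_+$. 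The composition $G_-G_+$ is a projection-type operator onto part of $H_4$, and two empty loops effectively double it into $G_-G_+\cdots G_-G_+$, which vanishes by the Hodge relations $\{G_-,G_+\}=0$, $G_-^2=0$, $G_+^2=0$ together with $[Q,G_+]=\Pi_4$.

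**Main obstacle.** The delicate part is bookkeeping the passage from the global diagrammatic statement to the local algebraic identity: one must verify that distributing the two empty-loop copies of $[Id]$ through the Leibniz/associativity structure of the vertex form genuinely produces a factor to which Lemma~\ref{tillmann} applies, and that the signs from the $\Z_2$-grading (the $J$-twist of the Remark in Section~\ref{tensors}) do not obstruct the cancellation. I expect the cleanest route is to recognize $\Phi(A,B,C,D)$ with $A\geq 2$, $B+C\geq 1$ as containing a subexpression equal to one of the vanishing bivectors of Lemma~\ref{tillmann} — most likely the ``$[G_-G_+G_-G_+]$''-type vanishing already used silently in the string-equation argument — and to invoke that lemma directly rather than recomputing; the remaining work is the purely combinatorial check that at least one such subexpression is always present once $A\geq 2$ and $B+C\geq 1$.
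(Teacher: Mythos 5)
Your reduction to the minimal configurations (two empty loops plus a single heavy insertion, everything else riding along through an $[Id]$-contraction) is sound and matches the paper's first step. But the core vanishing mechanism you propose is misidentified, and this is a genuine gap. An empty loop carries $[Id]$, not $[G_-G_+]$; so in the minimal case $A=2$, $B+C=1$ there is exactly \emph{one} factor of $G_-G_+$ in the whole tensor, and no product $G_-G_+G_-G_+$ can ever arise. Consequently the elementary Hodge relations you invoke ($\{G_-,G_+\}=0$, $G_\pm^2=0$, $[Q,G_+]=\Pi_4$) cannot kill the expression --- indeed, the statement is simply false in a Hodge dGBV-algebra without the $1/12$-axiom, so any argument that does not use that axiom (directly or through Lemma~\ref{tillmann}) must be incomplete. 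What actually happens in the paper is different: for $C\geq 1$ one writes the heavy half-edge as $G_+$ feeding into a $G_-$ leg, so that $\Phi(2,0,1,1)=\tilde\Phi(\cdot\,,G_+\cdot)$, where $\tilde\Phi$ is the \emph{trace-type} bivector with two empty loops and one $G_-$ insertion; this is one of the four vanishing diagrams of Lemma~\ref{tillmann} (Tillmann's vanishing, which is where the $1/12$-axiom enters), not the ``$[G_-G_+G_-G_+]$''-type vanishing from the string-equation proof that you point to.

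A second, smaller gap: you treat the heavy-loop case ($B\geq 1$) as interchangeable with the heavy-edge case, but it is not immediate. For $\Phi(2,1,0,1)$ the operator $G_-G_+$ sits on a loop, so the bivector of Lemma~\ref{tillmann} is not visible as a subexpression right away; the paper first applies Lemma~\ref{g-jump} (a consequence of the $7$-term relation) to move the $G_-$ off the loop, and only after this redrawing does the vanishing bivector appear. So the ``purely combinatorial check'' you defer at the end is exactly where the real content lies, and in one of the two cases it requires an algebraic identity (the $7$-term relation) that your outline never invokes.
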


In other words, if there are at least two empty loops at a vertex, then there
should not be any heavy loops or edges attached to this vertex. Otherwise the
contribution of the whole graph vanishes. This implies

\begin{corollary}
In the definition of correlators one should consider only graphs of one of the
following two types:
\begin{enumerate}
\item
One-vertex graphs with no heavy edges (loops).
\item
Arbitrary graphs with at most one empty loop at each vertex.
\end{enumerate}
The contribution of all other graphs vanishes.
\end{corollary}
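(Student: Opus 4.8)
The plan is to reduce the statement to a local computation at the single vertex in question and then dispose of it using the vanishing configurations of Lemma~\ref{tillmann} together with the second-order jump rule of Lemma~\ref{g-jump}. First I would make $\Phi(A,B,C,D)$ completely explicit. The vertex carries the form $(a_1,\dots,a_n)\mapsto\int a_1\cdots a_n$; contracting a pair of its arguments along an empty loop (with the sign operator $J$ of the Remark in Section~\ref{usage}) multiplies the remaining product, by supercommutativity, by one even ``handle'' element $\varepsilon=\sum_\mu(-1)^{\widetilde{f_\mu}}f^\mu f_\mu$, where $\{f_\mu\}$ and $\{f^\mu\}$ are $(\cdot,\cdot)$-dual bases. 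Thus the $A$ empty loops contribute the factor $\varepsilon^{A}$ inside the integral, the $B$ heavy loops contribute factors assembled from $[G_-G_+]$, and the $C$ heavy half-edges and $D$ leaves stay as free arguments. The structural input I would exploit is that every heavy half-edge is contracted against $G_-G_+=G_-\circ G_+$, whose image lies in $\mathrm{Im}\,G_-$; so a heavy attachment always feeds the vertex a vector of the form $G_-(G_+\,\cdot\,)$.

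Second, I would peel off one such heavy half-edge and integrate its $G_-$ by parts, using $\int G_-(a)b=(-1)^{\widetilde a}\int aG_-(b)$, so that the differential now acts across the rest of the vertex, i.e. on $\varepsilon^{A}\cdot(\text{remaining factors})$. Because $G_-$ is a second-order operator, Lemma~\ref{g-jump} lets me distribute it over the remaining legs, and the problem collapses to a finite list of local pictures at a vertex of genus at most two. Here the hypothesis $A\ge 2$ is essential: with a single empty loop the analogous local expression is genuinely nonzero and is exactly the content of the $1/12$-axiom, whereas with two or more handle factors present there is always a second place for a differential to land, and I would argue that every term surviving the distribution contains a repeated differential and hence one of the zero bivectors $[G_-^2]$, $[G_+^2]$, $[(G_-G_+)^2]$, or a vanishing vector recorded in Lemma~\ref{tillmann} (using also $G_-e_1=G_+e_1=0$). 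This yields $\Phi(A,B,C,D)=0$.

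The step I expect to be the real obstacle is precisely this last reduction: controlling the signs and the proliferation of terms produced by the second-order Leibniz expansion, and, above all, handling the mixed contributions in which the freed differential pairs a handle factor with a leaf or with another heavy factor rather than with a second handle. Such terms do not vanish one by one, so the argument must re-apply Lemma~\ref{g-jump} and a further integration by parts to show that they collapse onto the configurations of Lemma~\ref{tillmann}; organizing this uniformly in $A\ge 2$, $B$, $C$, and $D$ is the delicate point. Once Lemma~\ref{lemma-emptyedges} is established, the Corollary is immediate: a graph with nonvanishing $T(\Gamma)$ can carry two or more empty loops only at a vertex with no heavy attachment, and such a vertex, having no heavy edges or loops leaving it, must constitute an entire one-vertex component; every other vertex then has at most one empty loop.
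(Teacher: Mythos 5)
Your reduction of the Corollary to Lemma~\ref{lemma-emptyedges} is fine, and your setup (empty loops as handle insertions, heavy attachments feeding $G_-G_+(\cdot)$ into the vertex) is accurate. But the proof of the Lemma itself has a genuine gap, and it is exactly the step you flag as ``the real obstacle'': after integrating $G_-$ by parts and expanding with the $7$-term relation, the mixed terms (where the differential lands on a handle factor paired with a leaf, or with another heavy factor) do not vanish individually, and you give no argument for why their sum vanishes. Saying that surviving terms contain $[G_-^2]$, $[G_+^2]$, or $[(G_-G_+)^2]$ is also not accurate as stated --- those vanish trivially, whereas the configurations that actually matter are the nontrivial Tillmann-type pictures of Lemma~\ref{tillmann}, which are sum-level statements; completing your plan would essentially amount to reproving them in arbitrary valence, uniformly in $A,B,C,D$, which is precisely the bookkeeping you could not organize.

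The idea that closes this gap in the paper is a localization you do not use: since the vertex form is $(a_1,\dots,a_n)\mapsto\int a_1\cdots a_n$, an edge marked $[Id]$ between two such vertices can be contracted, and conversely any vertex can be \emph{split} into two vertices joined by an empty edge, with the attachments distributed arbitrarily. Hence for $C\geq 1$ one factors $\Phi(A,B,C,D)$ as the $[Id]$-contraction of $\Phi(A-2,B,C-1,D+1)$ with the fixed small form $\Phi(2,0,1,1)$, and for $B\geq 1$ as the contraction of $\Phi(A-2,B-1,C,D+1)$ with $\Phi(2,1,0,1)$. This makes the statement uniform in $A,B,C,D$ for free: one only has to kill the two tiny forms. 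For $\Phi(2,0,1,1)$ one writes the heavy half-edge as $G_-\circ G_+$ and recognizes $\tilde\Phi(\cdot,G_+\cdot)$ with $\tilde\Phi$ one of the vanishing pictures of Lemma~\ref{tillmann}; for $\Phi(2,1,0,1)$ one application of Lemma~\ref{g-jump} moves $G_-$ off the heavy loop and again produces that vanishing bivector. No signs proliferate and no genus-two local analysis is needed. Without this splitting step (or an equivalent uniformization), your argument as written does not go through.
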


This corollary dramatically simplifies all our calculations with graphs given
below. Also we can write down now the action of the Hodge field theory. 

Let $F_g^0(v_0,v_1,v_2,\dots)$, $v_i\in H\otimes\C[[\{T_{n,i}\}]]$ be the ``dimension zero'' part of the potential of the Hodge field theory, namely,
\begin{equation}
F_g^0:=\sum_{n}\frac{1}{n!}\sum_{a_1+\dots+a_n=3g-3+n}\langle \tau_{a_1}(v_{a_1})\dots \tau_{a_n}(v_{a_n})\rangle_g.
\end{equation}
The first sum is taken over $n\geq 0$ such that $2g-2+n>0$. So, it is exactly the generating function for the vertices of our graph expressions. Then the action of the Hodge field theory is equal to
\begin{multline}
A(v):=F_0^0(E_0+G_-v, E_1, E_2, \dots)+\hbar F_1^0(E_0+G_-v, E_1, E_2, \dots) \\
+\sum_{g\geq 2}\hbar^g F_g^0(E_0, E_1, E_2, \dots)-\frac{1}{2}\int Qv\cdot G_-v.
\end{multline}
If we put $T_{n,i}=0$ for $n\geq 1$, then we immediately obtain the BCOV-type action discussed in~\cite[Appendix]{bk} and~\cite[Appendix]{ls}. The similar actions were also studied in~\cite{dijk} and~\cite{gersha}.

\subsection{Proof of Lemma~\ref{lemma-emptyedges}}

We consider the form $\Phi(A,B,C,D)$ and we assume that $A\geq 2$.

First, let us study the case when $C\geq 1$. In this case, our $C+D$-form can
be represented as a contraction via the bivector $[Id]$ of two forms,
$\Phi(A-2,B,C-1,D+1)$ and $\Phi(2,0,1,1)$. Let us prove that the last one is
equal to zero. Indeed, this two-form can be represented as
$\tilde\Phi(\cdot,G_+\cdot)$, where the two-form $\tilde\Phi$ is represented by
the picture
\begin{equation}\label{vanishing-bivector}
\inspic{vanish.3}.
\end{equation}

According to Lemma~\ref{tillmann}, $\tilde\Phi=0$. Therefore, $\Phi(2,0,1,1)=0$
and the whole form $\Phi(A,B,C,D)$ is also equal to zero.

Now consider the case when $B\geq 1$. In this case, our $C+D$-form can
be represented as a contraction via the bivector $[Id]$ of two forms,
$\Phi(A-2,B-1,C,D+1)$ and $\Phi(2,1,0,1)$. Let us prove that the last one is
equal to zero. Indeed,
\begin{multline}
\Phi(2,1,0,1)=\inspic{vanish.12}=\inspic{vanish.13}\\
=\frac{1}{2}\inspic{vanish.14}=\frac{1}{2}\inspic{vanish.15}.
\end{multline}
Here the first equality is definition of $\Phi(2,1,0,1)$, the second one is just
an equivalent redrawing, the third equality is application of
Lemma~\ref{g-jump}, the fourth one is again an equivalent redrawing.

The last picture contains the bivector~\eqref{vanishing-bivector} which is equal
to zero according to Lemma~\ref{tillmann}. Therefore, the whole picture is
equal to zero, and $\Phi(2,1,0,1)=0$. So, the whole form $\Phi(A,B,C,D)$ is
equal to zero also in this case. This proves the lemma.

%%%%%%%%%%%%%%%%%%%%%%%%%%%%%%%%%%%%%%%%%%%%%%%%%%%%%%%%%%
%%%%%%%%%%%%%%%%%%%%%%%%%%%%%%%%%%%%%%%%%%%%%%%%%%%%%%%%%%
%%%%%%%%%%%%%%%%%%%%%%%%%%%%%%%%%%%%%%%%%%%%%%%%%%%%%%%%%%
%%%%%%%%%%%%%%%%%%%%%%%%%%%%%%%%%%%%%%%%%%%%%%%%%%%%%%%%%%
%%%%%%%%%%%%%%%%%%%%%%%%%%%%%%%%%%%%%%%%%%%%%%%%%%%%%%%%%%

\section{Main Lemma}

\subsection{Statement}
The main technical tool that we use in the proof of Theorem~\ref{thm-tautolog} 
is the lemma that we prove in this section.

\begin{lemma}[Main Lemma] \label{main-lemma} For any $v_1,\dots,v_n\in H$, $a_1,\dots,a_n\geq 0$,
\begin{multline}\label{main-lemma-statement}
\sum_{i=1}^n\langle \tau_{a_1}(v_1)\dots
\tau_{a_{i-1}}(v_{i-1})
\tau_{a_i}(Q(v_i))
\tau_{a_{i+1}}(v_{i+1})
\dots
\tau_{a_n}(v_n) \rangle_g = \\
-\sum_{i=1}^n\langle \tau_{a_1}(v_1)\dots
\tau_{a_{i-1}}(v_{i-1})
\tau_{a_i+1}(G_-(v_i))
\tau_{a_{i+1}}(v_{i+1})
\dots
\tau_{a_n}(v_n) \rangle_g.
\end{multline}
\end{lemma}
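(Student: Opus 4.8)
The plan is to prove the identity term-by-term at the level of individual graphs, exploiting the self-referential structure of correlators noted in the introduction's proof overview: a sum over all ways of varying a small local piece of a graph is itself a correlator. Each term $\tau_{a_i}(Q(v_i))$ on the left corresponds to inserting the operator $Q$ at a leaf, while each term $\tau_{a_i+1}(G_-(v_i))$ on the right inserts $G_-$ together with a shift $a_i \mapsto a_i+1$ in the $\psi$-power. So informally the claim is $Q = -G_-\psi$ when applied to a leaf of a correlator, and the strategy is to verify this at the single leaf that is being modified, with the rest of the graph held fixed.

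First I would fix the index $i$ and a leaf carrying $\tau_{a_i}(v_i)$ attached to some vertex $v$ of the graph. The operator $Q$ (resp.\ $G_-$) acting on $v_i$ feeds into the symmetric form~\eqref{internal-vertex} at that vertex, so by the invariance properties $\int Q(a)b = (-1)^{\tilde a+1}\int a Q(b)$ and $\int G_-(a)b = (-1)^{\tilde a}\int a G_-(b)$ from Section~\ref{cyclic Hodge algebras}, I can integrate $Q$ and $G_-$ by parts off the modified leaf and redistribute them across the remaining half-edges and leaves at $v$. Using that $Q$ is a derivation (Leibniz rule) and $G_-$ a second-order operator (the $7$-term relation), together with the auxiliary identities of Lemma~\ref{g-jump}, the local vertex contribution gets rewritten as a sum of terms in which $Q$ or $G_-$ now lands on neighbouring edges, loops, or leaves. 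The key point is that $Q$ and $G_-$ applied to the edge bivectors $[G_-G_+]$ and $[Id]$, and to the empty loops, produce controlled results: most such contributions vanish by Lemma~\ref{tillmann}, and the surviving ones are precisely what must be matched.

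The heart of the argument is that, after using the corollary to Lemma~\ref{lemma-emptyedges}, every vertex has genus $\leq 1$, so the combinatorial coefficient $P(\Gamma)$ at $v$ is an integral of $\psi$-classes over $\oM_{0,k}$ or $\oM_{1,k}$. The discrepancy between the $Q$-side and the $G_-$-side is therefore reduced to a genus $0$ or genus $1$ statement about how the operators $Q$ and $G_-$ interact with a single heavy edge $[G_-G_+]$ and with the $\psi$-shift $a_i \mapsto a_i+1$, which I expect to settle by a direct computation using $\Pi_4 = [Q,G_+]$, $\{G_+,G_-\}=0$, and the explicit action of $G_+$ on the Hodge decomposition. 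Summing over which half-edge at $v$ receives the operator, and then over all local configurations, reassembles a full correlator, which is exactly the mechanism by which the single-leaf identity propagates to the stated sum over all $i$. I would close the argument by summing over all graphs $\Gamma$, checking that the weights $V(\Gamma)$ and $T(\Gamma)$ transform compatibly under the local move, so that the two sides agree graph by graph.

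The main obstacle will be the bookkeeping of signs and the second-order nature of $G_-$: because $G_-$ does not simply differentiate but satisfies the $7$-term relation, integrating it by parts at a vertex spreads it over pairs of neighbouring slots, and one must verify that the resulting cross-terms either cancel pairwise or vanish by Lemma~\ref{tillmann}. Keeping track of the $\psi$-power shift $a_i \mapsto a_i+1$ simultaneously with this redistribution—i.e.\ matching the genus $0$/genus $1$ intersection numbers on both sides—is the delicate computational core, though it is exactly the low-genus calculation that the corollary to Lemma~\ref{lemma-emptyedges} makes tractable.
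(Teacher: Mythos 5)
Your proposal follows essentially the same route as the paper's own proof: convert the $Q$-insertions at leaves into $-[G_-]$ on heavy edges via the Leibniz rule and $[Q,G_-G_+]=-G_-$, invoke the genus $\leq 1$ restriction (the corollary of Lemma~\ref{lemma-emptyedges}) together with the self-similarity of correlators to localize the computation at a single heavy edge, and settle the resulting genus $0$ and genus $1$ special cases by redistributing $G_-$ with the $7$-term relation and Lemma~\ref{g-jump} while matching $\psi$-intersection numbers (the genus $0$ and $1$ topological recursion relations). Two points your sketch leaves implicit but the paper's treatment makes essential: the identity holds only after summing over $i$ (for arbitrary $v_j\in H$ the single-leaf version is false; the per-leaf statement, Lemma~\ref{cor-main-l}, requires the other insertions to lie in $H_0$), and the genus-$1$ configuration where the heavy edge is a loop requires the $1/12$-axiom, which is missing from your list of computational tools.
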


A simple corollary of this lemma is the following:

\begin{lemma}\label{cor-main-l} For any $w\in H$, $v_1,\dots,v_n\in H_0$,
\begin{multline}\label{corollary-lemma-statement}
\langle \tau_{a_0}(Qw)\tau_{a_1}(v_1)\dots \tau_{a_n}(v_n) \rangle_g\\
+\langle \tau_{a_0+1}(G_-w)\tau_{a_1}(v_1)\dots \tau_{a_n}(v_n) \rangle_g = 0.
\end{multline}
\end{lemma}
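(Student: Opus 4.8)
The plan is to derive Lemma~\ref{cor-main-l} as a special case of the Main Lemma (Lemma~\ref{main-lemma}) by choosing the vectors cleverly, so that most terms in~\eqref{main-lemma-statement} either vanish or cancel. Concretely, I would apply~\eqref{main-lemma-statement} to the $(n+1)$-tuple of vectors $(w,v_1,\dots,v_n)$ with descendant indices $(a_0,a_1,\dots,a_n)$, where $w\in H$ is arbitrary and $v_1,\dots,v_n\in H_0$. The left-hand side of~\eqref{main-lemma-statement} then becomes a sum of $n+1$ correlators, each obtained by applying $Q$ to exactly one of the arguments, and similarly the right-hand side is a sum of $n+1$ correlators with $G_-$ applied (and the descendant index raised by one).

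The key observation is that for the $n$ arguments lying in $H_0$ we have $Qv_j=0$ and $G_-v_j=0$ by the definition of $H_0$ in the Hodge decomposition (axiom (2) of a cyclic Hodge algebra, $QH_0=G_-H_0=0$). Therefore every term in which $Q$ or $G_-$ hits some $v_j$ with $j\geq 1$ simply drops out. What survives on the left-hand side is the single term $\langle \tau_{a_0}(Qw)\tau_{a_1}(v_1)\cdots\tau_{a_n}(v_n)\rangle_g$, and what survives on the right-hand side is the single term $-\langle \tau_{a_0+1}(G_-w)\tau_{a_1}(v_1)\cdots\tau_{a_n}(v_n)\rangle_g$. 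Moving the surviving right-hand term to the left then yields exactly~\eqref{corollary-lemma-statement}.

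So the proof is essentially a one-line specialization: substitute $(w,v_1,\dots,v_n)$ into the Main Lemma and invoke $QH_0=G_-H_0=0$ to kill all but two of the $2(n+1)$ terms. I expect no real obstacle here, since the entire content is already carried by the Main Lemma; the only point requiring the slightest care is the bookkeeping of which arguments are annihilated by $Q$ and $G_-$, and confirming that the lone surviving terms on the two sides are precisely the two correlators appearing in~\eqref{corollary-lemma-statement}. The hypothesis $v_1,\dots,v_n\in H_0$ is used in an essential way and cannot be relaxed, as it is exactly what forces the vanishing of the extraneous terms.
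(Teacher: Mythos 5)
Your proof is correct and is precisely the argument the paper intends: the paper presents Lemma~\ref{cor-main-l} as ``a simple corollary'' of the Main Lemma, and the specialization you describe --- applying~\eqref{main-lemma-statement} to the tuple $(w,v_1,\dots,v_n)$ and using $QH_0=G_-H_0=0$ to kill all terms except the two involving $w$ --- is exactly that derivation.
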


In other words, we can state informally that $Q+\psi G_-=0$.

\subsection{Special cases}

The proof of the lemma can be reduced to a small number of special cases. 
We consider correlators whose graphs have only one heavy edge.

\subsubsection{} The first case is the following:
Let $\sum_{i=1}^n a_i = n-4$. We prove that for any $v_1,\dots,v_n\in H$,
\begin{multline}\label{special-case-1}
\sum_{i=1}^n\langle \tau_{a_1}(v_1)\dots
\tau_{a_{i-1}}(v_{i-1})
\tau_{a_i}(Q(v_i))
\tau_{a_{i+1}}(v_{i+1})
\dots
\tau_{a_n}(v_n) \rangle_0 = \\
-\sum_{i=1}^n\langle \tau_{a_1}(v_1)\dots
\tau_{a_{i-1}}(v_{i-1})
\tau_{a_i+1}(G_-(v_i))
\tau_{a_{i+1}}(v_{i+1})
\dots
\tau_{a_n}(v_n) \rangle_0.
\end{multline}

First, we see that according to the definition of the correlator, the 
left hand side of Equation~\eqref{special-case-1} is the sum over graphs 
with two vertices and with $[G_-G_+]$ on the unique edge that connects the 
vertices. For each $I\sqcup J=\{1,\dots,n\}$ we can consider
the corresponding distribution of leaves between the vertices (to be precise, 
let us assume that $1\in I$). Then the coefficient of such graph is 
$\langle \tau_0\prod_{i\in I}\tau_{a_i}\rangle_0\langle \tau_0\prod_{j\in J}\tau_{a_j}\rangle_0$, 
and we take the sum over all possible positions of $Q$ at the leaves.

Using the Leibniz rule for $Q$ and the property that $[Q,G_-G_+]=-G_-$, we see that this sum is
equal to the sum over graphs with two vertices and with $[-G_-]$ on the unique edge that 
connects the vertices. For each $I\sqcup J=\{1,\dots,n\}$, $|I|,|J|\geq 2$, we consider
the corresponding distribution of leaves between the vertices. Then the coefficient of 
such graph is still 
$\langle \tau_0\prod_{i\in I}\tau_{a_i}\rangle_0\langle \tau_0\prod_{j\in J}\tau_{a_j}\rangle_0$, 
and the underlying tensor expression can be written (after we multiply the whole sum by $-1$) as
\begin{equation}
\left(v_1,\prod_{i\in I\setminus\{1\}}v_i\cdot G_-(\prod_{j\in J}v_j)\right).
\end{equation}

Let us recall that the $7$-term relation for $G_-$ implies that
\begin{align}\label{7-term-corr}
G_-(\prod_{j\in J}v_j)=&\sum_{i,j\in J,\ i<j}
G_-(v_iv_j)\prod_{k\in J \setminus\{i,j\}}v_k \\
&-(|J|-2)\sum_{j\in J}
G_-(v_j)\prod_{i\in J \setminus\{i,j\}}v_i.\notag
\end{align}
Using this, we can rewrite the whole sum over graphs as
\begin{multline}\label{rewrite-1}
\sum_{1<i<j}\left[\left(v_1,\prod_{k\not=1,i,j}v_k\cdot G_-(v_iv_j)\right)\cdot\right. \\
\left.\sum_{I'\sqcup J'
\sqcup\{i,j\}=\{2,\dots,n\}}
\langle
\tau_{a_1}
\tau_0
\prod_{k\in I'}
\tau_{a_k}
\rangle_0
\langle
\tau_{a_i}
\tau_{a_j}
\tau_0
\prod_{k\in J'}
\tau_{a_k}
\rangle_0\right]\\
-\sum_{i\not=1}\left[\left(v_1,\prod_{j\not=1,i}v_j\cdot G_-(v_i)\right)\cdot\right. \\
\left.\sum_{I'\sqcup J'
\sqcup\{i\}=\{2,\dots,n\}}
\left(|J'|-1\right)\langle
\tau_{a_1}
\tau_0
\prod_{j\in I'}
\tau_{a_j}
\rangle_0
\langle
\tau_{a_i}
\tau_0
\prod_{j\in J'}
\tau_{a_j}
\rangle_0\right].
\end{multline}
Using that
\begin{multline}
\sum_{I'\sqcup J'
\sqcup\{i,j\}=\{2,\dots,n\}}
\langle
\tau_{a_1}
\tau_0
\prod_{k\in I'}
\tau_{a_k}
\rangle_0
\langle
\tau_{a_i}
\tau_{a_j}
\tau_0
\prod_{k\in J'}
\tau_{a_k}
\rangle_0
=\\
\langle
\tau_{a_1+1}
\prod_{k\not=1}
\tau_{a_k}
\rangle_0,
\end{multline}
Equation~\eqref{7-term-corr}, and the fact that
\begin{multline}
(n-3)\langle
\tau_{a_1+1}
\prod_{j\not=1}
\tau_{a_j}
\rangle_0
\\
-\sum_{I'\sqcup J'
\sqcup\{i\}=\{2,\dots,n\}}
\left(|J'|-1\right)\langle
\tau_{a_1}
\tau_0
\prod_{j\in I'}
\tau_{a_j}
\rangle_0
\langle
\tau_{a_i}
\tau_0
\prod_{j\in J'}
\tau_{a_j}
\rangle_0
\\
=
\langle
\tau_{a_i+1}
\prod_{j\not=i}
\tau_{a_j}
\rangle_0,
\end{multline}
we can rewrite Expression~\eqref{rewrite-1} as
\begin{equation}
\sum_{i=1}^n\left(G_-(v_i),\prod_{j\not=i}v_j\right)\langle\tau_{a_i+1}
\prod_{j\not=i}\tau_{a_j}\rangle_0.
\end{equation}
The last formula coincides by definition with the right hand side of 
Equation~\eqref{special-case-1} multiplied by $-1$. This proves the first 
special case.

\subsubsection{}
The second case is in genus $1$. Let $\sum_{i=1}^n a_i=n-1$. We prove
that for any $v_1,\dots,v_n\in H$,
\begin{multline}\label{special-case-2}
\sum_{i=1}^n\langle \tau_{a_1}(v_1)\dots
\tau_{a_{i-1}}(v_{i-1})
\tau_{a_i}(Q(v_i))
\tau_{a_{i+1}}(v_{i+1})
\dots
\tau_{a_n}(v_n) \rangle_1 = \\
-\sum_{i=1}^n\langle \tau_{a_1}(v_1)\dots
\tau_{a_{i-1}}(v_{i-1})
\tau_{a_i+1}(G_-(v_i))
\tau_{a_{i+1}}(v_{i+1})
\dots
\tau_{a_n}(v_n) \rangle_1.
\end{multline}

According to the definition of the correlator, the left hand side of 
Equation~\eqref{special-case-2} is the sum over graphs of two possible types.
The first type include graphs with two vertices and two edges. The first 
edge is heavy and connects the vertices; the second edge is 
an empty loop attached to the first vertex. For each $I\sqcup J=\{1,\dots,n\}$, $|J|\geq 2$, 
we can consider
the corresponding distribution of leaves between the vertices (we assume that 
leaves with indices in $I$ are at the first edge). Then the coefficient of such 
graph is $\langle \tau_0\prod_{i\in I}\tau_{a_i}\rangle_1\langle \tau_0\prod_{j\in J}\tau_{a_j}\rangle_0$.
The second type include graphs with one vertex and one heavy loop. 
All leaves are attached to this vertex, and the coefficient of such graph is
$\langle \tau_0^2\prod_{i=1}^n\tau_{a_i}\rangle_0$.
For both types of graphs, we take the sum over all possible positions of $Q$ at 
the leaves.

Using the Leibniz rule for $Q$ and the property that $[Q,G_-G_+]=-G_-$, we get 
the same graphs as before, but there is no $Q$, and instead of $[G_-G_+]$ we
have $-[G_-]$ on the corresponding edge.
Using Lemma~\ref{g-jump}, we move $-G_-$ in graphs of the first type 
to the leaves marked by indices in $J$. Using the $1/12$-axiom and Lemma~\ref{g-jump}, 
we move $-G_-$ in graphs of the second type to all leaves.

This way we get graphs of the same type in both cases. We get graphs with one vertex, 
one empty loop attached to it, all leaves are also attached to this vertex, and there 
is $-G_-$ on one of the leaves. One can easily check that the coefficient of the graphs 
with $-G_-$ at the $i$-th leaf is equal to
\begin{multline}
\sum_{I\sqcup J \sqcup \{i\}=\{1,\dots,n\}}
\langle \tau_0\prod_{k\in I}\tau_{a_k}\rangle_1\langle \tau_0\tau_{a_i}\prod_{k\in J}\tau_{a_k}\rangle_0
+ \frac{1}{24}\langle \tau_0^2\prod_{k=1}^n\tau_{a_k}\rangle_0 \\
= \langle \tau_{a_i+1}\prod_{k\not= i}\tau_{a_k}\rangle_1
\end{multline}
It is exactly the unique graph contributing to the $i$-th summand of the right hand side of Equation~\eqref{special-case-2},
and the coefficient is right. This proves that special case.

\subsubsection{}
Consider $g\geq 2$. Let $\sum_{i=1}^n a_i=3g+n-4$. In this case, the statement that
for any $v_1,\dots,v_n\in H$,
\begin{multline}\label{special-case-3}
\sum_{i=1}^n\langle \tau_{a_1}(v_1)\dots
\tau_{a_{i-1}}(v_{i-1})
\tau_{a_i}(Q(v_i))
\tau_{a_{i+1}}(v_{i+1})
\dots
\tau_{a_n}(v_n) \rangle_g = \\
-\sum_{i=1}^n\langle \tau_{a_1}(v_1)\dots
\tau_{a_{i-1}}(v_{i-1})
\tau_{a_i+1}(G_-(v_i))
\tau_{a_{i+1}}(v_{i+1})
\dots
\tau_{a_n}(v_n) \rangle_g,
\end{multline}
is immediately reduced to $0=0$; it is a simple corollary of Lemma~\ref{tillmann}.

\subsection{Proof of Main Lemma}

Consider the left hand side of Equation~\eqref{main-lemma-statement}.
As usual, using the Leibniz rule for $Q$ and the property that $[Q,G_-G_+]=-G_-$,
we can remove all $Q$, but then we must change one of $[G_-G_+]$ on edges to $-[G_-]$.
Let us cut out the peaces of graphs that includes this edges with $-[G_-]$, all empty 
loops, leaves and  halves of heavy edges attached to the ends of this special 
edge.

Since we consider the sum over all possible graphs contributing to correlators, these 
small pieces can be gathered into groups according to the type of the rest of the 
initial graph. Each group forms exactly one of the special cases studied above. So, we 
know that $-G_-$ should jump either to one of the leaves or to one of the heavy edges 
attached to the ends of its edge. In the 
first case, we get exactly the graphs in the right hand side of 
Equation~\eqref{main-lemma-statement}; in the second case, we get zero. One can 
easily check that we get the right coefficients for the graphs in the right hand 
side of Equation~\eqref{main-lemma-statement}. This proves the lemma.

%%%%%%%%%%%%%%%%%%%%%%%%%%%%%%%%%%%%%%%%%%%%%%%%%%%%%%%%%%
%%%%%%%%%%%%%%%%%%%%%%%%%%%%%%%%%%%%%%%%%%%%%%%%%%%%%%%%%%
%%%%%%%%%%%%%%%%%%%%%%%%%%%%%%%%%%%%%%%%%%%%%%%%%%%%%%%%%%
%%%%%%%%%%%%%%%%%%%%%%%%%%%%%%%%%%%%%%%%%%%%%%%%%%%%%%%%%%
%%%%%%%%%%%%%%%%%%%%%%%%%%%%%%%%%%%%%%%%%%%%%%%%%%%%%%%%%%

\section{Proof of Theorem~\ref{thm-tautolog}}

\subsection{Equivalence of expression in graphs}

Consider the expression in correlators corresponding to
a $\psi$-$\kappa$-stratum as it is described in Section~\ref{integrals-strata}. 
To each vertex of the corresponding stable dual graph we assign the sum of
graphs that forms correlator in the sense of Section~\ref{subsec-correlators}.
The leaves of these graphs corresponding to the edges of the stable dual
graph (nodes) are connected in these pictures by edges with $[\Pi_0]$ (the restriction 
of the scalar product to $H_0$). We call the edges with $[\Pi_0]$ ``white
edges'' and mark them in pictures by thick white points, see~\eqref{edges}.

The axioms of cyclic Hodge algebra imply a system of linear equations for the graphs of
this type. In particular, it has appeared that playing with this linear equations we 
can always get rid of white edges in the sum of pictures corresponding to a
stable dual graph, see~\cite{ls,s,ss}. However, previously it was just an experimental
fact. Now we can show how it works in general.

The numerous examples of the correspondence between stable dual graphs and
graphs expressions in cyclic Hodge algebras and also of the linear relations implied by
the axioms of cyclic Hodge algebra are given in~\cite{ls,s,ss}.

Below, we explain how one can represent the expression in correlators
corresponding to a $\psi$-$\kappa$-stratum in terms of graphs with only empty
and heavy edges and with no white edges. The unique tool that we need is
Lemmas~\ref{main-lemma} and~\ref{cor-main-l} proved above.

\subsection{The simplest example}

Consider a stable dual graph with two vertices and one edge connecting them:
\begin{equation}
\inspic{pics.1}
\end{equation}
The corresponding expression in correlators is
\begin{multline}
\langle\tau_{a_0}(e_{j_1})\prod_{i=1}^{n_1}\tau_{a_i}(e_\bullet)
\prod_{i=1}^{l_1}\tau_{u_i}(e_1)\rangle_{g_1}\cdot
\\
\eta^{j_1j_2}\cdot
\langle\tau_{b_0}(e_{j_2})\prod_{i=1}^{n_2}\tau_{b_i}(e_\bullet)
\prod_{i=1}^{l_2}\tau_{v_i}(e_1)\rangle_{g_1}
\end{multline}
(here we denote by $e_\bullet$ an arbitrary choice of $e_i\in H_0$).
It is convenient for us to rewrite this expression as
\begin{multline}
\langle\tau_{a_0}(x_{\alpha_1})\prod_{i=1}^{n_1}\tau_{a_i}(e_\bullet)
\prod_{i=1}^{l_1}\tau_{u_i}(e_1)\rangle_{g_1}\cdot
\\
[\Pi_0]^{\alpha_1\alpha_2}\cdot
\langle\tau_{b_0}(x_{\alpha_2})\prod_{i=1}^{n_2}\tau_{b_i}(e_\bullet)
\prod_{i=1}^{l_2}\tau_{v_i}(e_1)\rangle_{g_1},
\end{multline}
where $\{x_\alpha\}$ is the basis of the whole $H$. Using the fact that
$\Pi_0=Id-QG_+-G_+Q$ and applying Lemma~\ref{cor-main-l}, we
obtain
\begin{multline}\label{1edge}
\langle\tau_{a_0}(x_{\alpha_1})\prod_{i=1}^{n_1}\tau_{a_i}(e_\bullet)
\prod_{i=1}^{l_1}\tau_{u_i}(e_1)\rangle_{g_1}\cdot
\\ [\Pi_0]^{\alpha_1\alpha_2}\cdot
\langle\tau_{b_0}(x_{\alpha_2})\prod_{i=1}^{n_2}\tau_{b_i}(e_\bullet)
\prod_{i=1}^{l_2}\tau_{v_i}(e_1)\rangle_{g_1} \\
=
\langle\tau_{a_0}(x_{\alpha_1})\prod_{i=1}^{n_1}\tau_{a_i}(e_\bullet)
\prod_{i=1}^{l_1}\tau_{u_i}(e_1)\rangle_{g_1}\cdot
\\
[Id]^{\alpha_1\alpha_2}\cdot
\langle\tau_{b_0}(x_{\alpha_2})\prod_{i=1}^{n_2}\tau_{b_i}(e_\bullet)
\prod_{i=1}^{l_2}\tau_{v_i}(e_1)\rangle_{g_1} \\
-
\langle\tau_{a_0+1}(x_{\alpha_1})\prod_{i=1}^{n_1}\tau_{a_i}(e_\bullet)
\prod_{i=1}^{l_1}\tau_{u_i}(e_1)\rangle_{g_1}\cdot
\\
[G_-G_+]^{\alpha_1\alpha_2}\cdot
\langle\tau_{b_0}(x_{\alpha_2})\prod_{i=1}^{n_2}\tau_{b_i}(e_\bullet)
\prod_{i=1}^{l_2}\tau_{v_i}(e_1)\rangle_{g_1} \\
-
\langle\tau_{a_0}(x_{\alpha_1})\prod_{i=1}^{n_1}\tau_{a_i}(e_\bullet)
\prod_{i=1}^{l_1}\tau_{u_i}(e_1)\rangle_{g_1}\cdot
\\
[G_-G_+]^{\alpha_1\alpha_2}\cdot
\langle\tau_{b_0+1}(x_{\alpha_2})\prod_{i=1}^{n_2}\tau_{b_i}(e_\bullet)
\prod_{i=1}^{l_2}\tau_{v_i}(e_1)\rangle_{g_1}
\end{multline}
In all three summands of the right hand side we still have two correlators,
whose leaves corresponding to the nodes are connected by some special edges. 
But now the connecting edge is either marked by $[Id]$ (an empty edge) or by
$[G_-G_+]$ (an ordinary heavy edge). So, this way we get rid of the white edge
in this case.

Informally, in terms of pictures, we can describe Equation~\eqref{1edge} as
\begin{align}
\inspic{algpic.1}& =
\inspic{algpic.2} 
-\inspic{algpic.3}
\\ &
-\inspic{algpic.4}.\notag
\end{align}
When we put $\psi$, we mean that we add one more $\psi$-class at the node at the
corresponding branch of the curve. Dashed circles denote correlators.

\subsection{Example with two nodes} Now we consider an example of stratum, 
whose generic point is represented by a three-component curve. Again, we 
allow arbitrary $\psi$-classes at marked points and two branches at nodes. 

We perform the same calculation as above, but now we explain it in terms of informal 
pictures from the very beginning. So, the first step is the same as above:
\begin{align}
\inspic{algpic.5}&=\inspic{algpic.6}\\
& -\inspic{algpic.7} \notag \\
& -\inspic{algpic.8}. \notag
\end{align}
Then we apply Lemma~\ref{main-lemma} to each of the summands 
in the right hand side:
\begin{align}
\inspic{algpic.6}&=\inspic{algpic.9}\\
&-\inspic{algpic.10} \notag \\
&-\inspic{algpic.11} \notag \\
&-\inspic{algpic.12} \notag \\
&-\inspic{algpic.13}, \notag 
\end{align}
\begin{align}
-\inspic{algpic.7}& =-\inspic{algpic.14}\\
&+\inspic{algpic.15} \notag \\
&+\inspic{algpic.16} \notag \\
&+\inspic{algpic.12},  \notag 
\end{align}
and
\begin{align}
-\inspic{algpic.8} & =-\inspic{algpic.17}\\
&+\inspic{algpic.18}  \notag \\
&+\inspic{algpic.19}  \notag \\
&+\inspic{algpic.13},  \notag 
\end{align}
We take the sum of these three expressions, 
and we see that all pictures where we have 
edges with $[G_-]$ and $[G_+]$ are cancelled. 
So, we get an expression for the sum of graphs 
representing the initial stratum in terms of 
graphs with only empty and heavy edges. 

\subsection{General case}
The general argument is exactly the same as in the second example.
In fact, this gives a procedure how to write an expression in graphs with only
empty and heavy edges (and no white edges) starting from a stable dual graph. 
Let us describe this
procedure.
 
Take a stable dual graph corresponding to a $\psi$-$\kappa$-stratum of dimension
$k$ in $\oM_{g,n}$. First, we are to decorate it a little bit. 
For each edge, we either leave it untouched, or substitute it with an arrow 
(in two possible ways). At the pointing end of the arrow, we increase the 
number of $\psi$-classes by $1$. Each of these graphs we weight with the 
inversed order of its automorphism group (automorphisms must preserve 
all decorations) multiplied by $(-1)^{arr}$, where $arr$ is the number 
of arrows. 

Consider a decorated dual graph. To each its vertex we associate the
corresponding correlator of cyclic Hodge algebra (we add new leaves in order to represent 
$\kappa$-classes). Then we connect the leaves corresponding
to the nodes either by empty edges (if the corresponding edge of the decorated
graph is untouched) or by heavy edges (if the corresponding edge of the dual
graph is decorated by an arrow).

It is obvious that the number of heavy edges in the final graphs is equal to $k$. 

\subsection{Coefficients}
We can simplify the resulting graphs obtained in the previous subsection. First,
we can contract empty edges (as much as it is possible; it is forbidden to
contract loops). Second, we can remove
leaves added for the needs of $\kappa$-classes. Indeed, each such leaf is
equipped with a unit of $H$, so it doesn't affect the contraction of tensors
corresponding to a graph. Moreover, when we remove all leaves corresponding to
$\kappa$-classes, we still have graph with at least trivalent vertices.
Otherwise, this graph is equal to zero, c.~f. arguments in the proofs of
string and dilaton equations. 

So, we obtain final graphs that have the same number of heavy edges as the dimension
of the initial $\psi$-$\kappa$-stratum, the same number of leaves as the initial
dual graph, and some number of empty loops, at most one at each vertex. The
exceptional case is when $k=0$; in this case we obtain only one graph, with 
one vertex, $n$ leaves, and $g$ empty loops.

In the first case, let us turn a graph like this into a stable dual graph. Just replace
its vertices with no empty loops by vertices of genus zero, vertices with
empty loops by vertices of genus one, heavy edges are edges, and leaves are
leaves. There are no $\psi$- or $\kappa$- classes. It is obvious that the
codimension of the stratum corresponding to this dual graph is $k$. Indeed, in
this case it is just the number of nodes. 

So, to each $\psi$-$\kappa$-stratum $X$ of dimension $k>0$ we associate a linear
combination $\sum_i c_i Y_i$ of strata of codimension $k$ with no $\psi$- or
$\kappa$-classes, whose curves have irreducible components of genus $0$ and $1$
only. 

\begin{proposition}\label{prop1}
We have $c_i=X\cdot Y_i$.
\end{proposition}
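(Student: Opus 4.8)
We have a $\psi$-$\kappa$-stratum $X$ of dimension $k>0$. The algebraic procedure of the previous subsections produces a linear combination $\sum_i c_i Y_i$ of codimension-$k$ strata $Y_i$ (with no $\psi$- or $\kappa$-classes, and with genus-$0$ and genus-$1$ components only), where $c_i$ is the combinatorial coefficient the procedure spits out. The claim is that this purely algebraic coefficient $c_i$ coincides with the geometric intersection number $X \cdot Y_i$ in $\oM_{g,n}$. So the proposition identifies the output of the Main-Lemma manipulation with an intersection-theoretic quantity.

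**The plan.** The key is that the procedure described in Section~8.4 is, step by step, mirrored by the geometry of intersecting $X$ with a complementary-dimension stratum. I would argue as follows. Since $X$ has dimension $k$ and each $Y_i$ has codimension $k$, the intersection $X\cdot Y_i$ is a number, computed by taking a stratum $Y_i$ of complementary dimension and counting (with excess-intersection corrections) how it meets $X$. First I would fix $Y_i$ and analyze $X\cdot Y_i$ geometrically: a generic curve in $Y_i$ is a stable curve whose dual graph has only genus-$0$ and genus-$1$ vertices and exactly $k$ nodes. Intersecting with $X$ amounts to deciding, for each of the $k$ nodes of $X$ (each initially represented algebraically by $[\Pi_0]$), whether that node persists in $Y_i$. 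I would match the two cases: if the node survives, the branches remain separated and we get an excess-intersection contribution carrying a factor of $-\psi$ on one half-edge (this is precisely the replacement $[\Pi_0]\rightsquigarrow -\psi[G_-G_+]$ or $-[G_-G_+]\psi$); if the node is smoothed, the two branches merge and the corresponding edge is contracted ($[\Pi_0]\rightsquigarrow[Id]$). This is exactly the decorated-graph bookkeeping of Section~8.4, where each edge is either left untouched (empty edge, node smoothed) or replaced by an arrow with an extra $\psi$ and a sign $(-1)$ (heavy edge, node persisting with excess).

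**Carrying it out.** Concretely, I would proceed edge by edge using the excess-intersection / splitting principle on $\oM_{g,n}$. The normal bundle to a boundary divisor corresponding to a node is the tensor product of the two cotangent lines at the branches, so its first Chern class is $-(\psi' + \psi'')$; this is the geometric origin of the $-\psi$ factors and of the two-way arrow decoration (the $\psi$ may sit on either half-edge). The $(-1)^{arr}$ sign and the factor $1/|\mathrm{aut}|$ from the decoration rule then match the standard normalization of self-intersections of boundary strata together with the automorphism factors appearing in the stratum class. I would verify the base case $k=1$ directly: this is exactly the simplest example of Section~8.2, where Equation~\eqref{1edge} reproduces the self-intersection formula $[\Pi_0]=[Id]-\psi[G_-G_+]-[G_-G_+]\psi$, and the three terms correspond to the smoothed node, and the two excess contributions with $\psi$ on each branch. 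For general $k$ the procedure is applied independently at each node (as in the two-node example of Section~8.3, where all intermediate $[G_-]$, $[G_+]$ terms cancel), so the coefficient factorizes over nodes exactly as the intersection number does over the independent normal directions.

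**The main obstacle.** The hard part will be the careful bookkeeping of automorphism factors and signs: one must check that the combinatorial coefficient $c_i$ produced by summing over all decorations of the dual graph of $X$ (each weighted by $(-1)^{arr}/|\mathrm{aut}|$) matches the intersection number $X\cdot Y_i$ including the $V(\Gamma)$-type normalization $\prod_v 2^{g(v)}g(v)!/|\mathrm{aut}|$ coming from turning empty loops into genus-$1$ vertices. The subtlety is that several distinct decorated graphs of $X$ may yield the same final stratum $Y_i$ after contracting empty edges and deleting $\kappa$-leaves, so $c_i$ is really a sum over a fiber of this forgetful map; one must show this sum reproduces the single geometric number $X\cdot Y_i$. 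I would handle this by invoking the standard dictionary (as in~\cite{g,fsz}) between tautological-class intersection numbers and weighted sums over dual graphs, reducing the claim to a vertex-by-vertex and edge-by-edge comparison that is already implicitly verified in the worked examples of Sections~8.2 and~8.3.
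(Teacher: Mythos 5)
Your proposal is correct and follows essentially the same route as the paper's proof: a node-by-node induction in which each white edge $[\Pi_0]$ is resolved through the transversal-versus-excess-intersection dichotomy (empty edge when the node is smoothed, heavy edge with a $-\psi$ correction when it persists, the paper citing Graber--Pandharipande for the excess term), anchored by the fact that the correlator coefficients $V(\Gamma)P(\Gamma)$ are by construction the intersection numbers of $\psi$-classes with genus-$0/1$ boundary strata. The only slip is cosmetic: the normal bundle of a boundary divisor is the tensor product of the two \emph{tangent} (not cotangent) lines at the branches, which is precisely why $c_1=-(\psi'+\psi'')$ --- the formula you state and use is the correct one, so the argument is unaffected.
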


\begin{proof}
We prove it two steps. First, consider a one-vertex stable dual graph with no
edges (just a correlator). In this case, the intersection number $X\cdot Y_i$ is 
just by definition $c_i=V(\Gamma_i)P(\Gamma_i)$, where $\Gamma_i$ is the
cyclic Hodge algebra graph that turns into 
$Y_i$ via the procedure described above.

Then, consider a stable dual graph with one edge. It is the intersection of the
one-vertex stable dual graph with an irreducible component of the boundary. For
a given $Y_i$, this component of the boundary either intersects it transversaly,
or we have an excessive intersection. In the first case, the corresponding node
is not represented in $Y_i$. This means that in $\Gamma_i$ it should be an
empty edge. In the second case, this node is one of the nodes of $Y_i$, so it
should be a heavy edge of $\Gamma_i$. Also, it is an excessive intersection, so
we are to add the sum of $\psi$-classes with the negative sign at the marked
points (half-edges) corresponding to the node, see~\cite[Appendix]{grpa}.

Exactly the same argument works for an arbitrary number of nodes, we just 
extend it by induction.
\end{proof}

In the case of $k=0$, we get just one final graph with coefficient $c$.

\begin{proposition}\label{prop2}
If $k=0$, the coefficient of the final graph is equal to the number of points in
the initial $\psi$-$\kappa$-stratum.
\end{proposition}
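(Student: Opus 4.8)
The plan is to identify the geometric degree of the $0$-dimensional stratum with the coefficient $c$ produced by the procedure of the previous subsections, treating the statement as the degenerate instance of Proposition~\ref{prop1} in which the complementary stratum is the whole fundamental class $[\oM_{g,n}]$. Since $X$ has dimension $0$, I read its ``number of points'' as the degree of the associated $0$-cycle, $\deg X=\int_{\oM_{g,n}}X$, understood as an orbifold intersection number when the generic curve of $X$ has automorphisms or $X$ carries $\kappa$-classes.

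First I would pin down which decorated graph survives. Arrows create heavy edges and the number of heavy edges in the final graph equals $k$; hence for $k=0$ no edge of the dual graph $G$ may be decorated by an arrow, and the correlator at every vertex must itself contain no internal heavy edge. Thus the only surviving term carries the weight $1/|\mathrm{aut}(G)|$, every node becomes an empty edge, and contracting all non-loop empty edges collapses $G$ to the single vertex with $g$ empty loops. Its coefficient $c$ is therefore assembled, up to the global factor $1/|\mathrm{aut}(G)|$, from the purely combinatorial parts $V(\Gamma_v)P(\Gamma_v)$ of the one-graph correlators sitting at the vertices of $G$.

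Next I would evaluate both sides and match them. On the geometric side, $\deg X=\int_{\oM_{g,n}}X$ is computed by exactly the two rules of Section~\ref{integrals-strata} applied to the trivial point theory: the factorization axiom splits the integral along the nodes of $G$, and the substitution $\kappa_{b_1,\dots,b_l}=\pi_*\prod_{j}\psi^{b_j+1}$ rewrites each vertex factor as a pure $\psi$-integral $\int_{\oM_{g_v,m_v+l_v}}\prod\psi^{a}$. On the algebraic side, by the geometric-meaning remark recorded right after the definition of $V(\Gamma)$ in Section~\ref{subsec-correlators}, the product $V(\Gamma_v)P(\Gamma_v)$ is exactly this same $\psi$-integral. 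Consequently both $c$ and $\deg X$ reduce to the identical expression $\tfrac{1}{|\mathrm{aut}(G)|}\prod_{v}\int_{\oM_{g_v,m_v+l_v}}\prod\psi^{a}$, which is the claim; equivalently, this is the base case of the induction in Proposition~\ref{prop1} applied with $Y=[\oM_{g,n}]$, so that $c=X\cdot[\oM_{g,n}]=\deg X$.

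The only genuine obstacle I anticipate is the bookkeeping of the combinatorial prefactors: I must check that, after the empty edges are contracted and the loops of $G$ are turned into empty loops, the automorphism factor $|\mathrm{aut}(G)|$ recombines with the per-vertex data $2^{g(v)}g(v)!$ into the single factor $V(\Gamma)$ of the collapsed graph, and that the genus-$1$ vertices, which carry an empty loop and contribute the $\tfrac{1}{24}$-type weights through $\int_{\oM_{1,1}}\psi_1$, enter with the correct multiplicity. This is the same symmetry-factor matching already used in the proofs of the string and dilaton equations, so I expect it to be routine rather than conceptual.
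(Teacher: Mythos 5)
Your proof is correct and takes essentially the same route as the paper's: for $k=0$ no arrow decoration can survive and every vertex correlator has dimension zero, so everything reduces to the correlators of the Gromov--Witten theory of the point, where the coefficient is the (orbifold) point count of the stratum. You simply spell out the coefficient matching that the paper dismisses as ``obvious,'' including the automorphism-factor bookkeeping that both arguments ultimately rely on.
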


\begin{proof}
If $k=0$, this means that each of the vertices of the initial stable dual graph
also has dimension $0$, and the corresponding correlator of cyclic Hodge algebra is
represented by one one-vertex graph with no heavy edges. Also this means that 
each edge of the initial stable dual graph is replaced in the algorithm above 
by an empty edge. So, we can think that we just work with the correlators of the
Gromov-Witten theory of the point. In this case Proposition becomes obvious.
\end{proof}

Now we deduce Theorem~\ref{thm-tautolog} from these propositions.

\subsection{Proof of Theorem~\ref{thm-tautolog}}

Consider the system of subalgebras 
\begin{equation}
RH_1^*(\oM_{g,n})\subset RH^*(\oM_{g,n})
\end{equation}
of the cohomological tautological algebras of $\oM_{g,n}$
generated by strata with no $\psi$- or $\kappa$-classes and with irreducible
curves of genus $0$ and $1$ only. 

Let $L$ be a linear combination of $\psi$-$\kappa$-strata of dimension $k$ in
$\oM_{g,n}$. Then the expression in correlators of cyclic Hodge algebras corresponding to 
$L$ is equaivalent to a sum of some graphs with coefficients equal to the 
intersection of $L$ with classes in $RH_1^k(\oM_{g,n})$.

So, if the class of $L$ is equal to zero, then the corresponding equation (and
also the whole system of equations that we described in Section~\ref{rel-for-corr}) 
for correlators of cyclic Hodge algebra is valid. Theorem is proved.

\subsubsection{Remark}\label{last-remark}
Evidently, $RH_1^*(\oM_{g,n})$ is a
module over $RH^*(\oM_{g,n})$. Also it is obvious that $RH_1^*(\oM_{g,n})$ is
closed under pull-backs and push-forwards via the forgetful morphisms.
This explains why it was enough to make only one check in the simplest case in
order to get the system of equations in~\cite{ls,ss} (cf.~an argument in the 
last section in~\cite{ls}).

\subsection{An interpretation of Propositions~\ref{prop1} and~\ref{prop2}}

From the point of view of the theory of Zwiebach invariants, both propositions
look very natural. Indeed, we try to give a graph
expression for the integral of an induced Gromov-Witten form
multiplied by a tautological class $X$. Since we know that we are able to
integrate only degree zero parts of induced Gromov-Witten invariants, we should just take 
the sum over all graphs that correspond to the strata of complimentary dimension
in $RH_1^*(\oM_{g,n})$. The coefficients are to be the intersection numbers of
these strata with $X$.

On the other hand, we know that in any Gromov-Witten theory it is enough to fix
the integrals of Gromov-Witten invariants multiplied by $\psi$-classes. Then the
integrals of Gromov-Witten invariants multiplied by arbitrary tautological
classes are expressed by universal formulas. We can try to use these universal
formulas also in Hodge field theory. They are exactly our expressions with white edges.

So, we have two different natural ways to express in terms of graphs the 
integrals of induced Gromov-Witten invariants multiplied by 
tautological classes. Propositions~\ref{prop1} and~\ref{prop2} state that these
two different expressions coinside.

%%%%%%%%%%%%%%%%%%%%%%%%%%%%%%%%%%%%%%%%%%%%%%%%%%%%%%%%%%
%%%%%%%%%%%%%%%%%%%%%%%%%%%%%%%%%%%%%%%%%%%%%%%%%%%%%%%%%%
%%%%%%%%%%%%%%%%%%%%%%%%%%%%%%%%%%%%%%%%%%%%%%%%%%%%%%%%%%
%%%%%%%%%%%%%%%%%%%%%%%%%%%%%%%%%%%%%%%%%%%%%%%%%%%%%%%%%%
%%%%%%%%%%%%%%%%%%%%%%%%%%%%%%%%%%%%%%%%%%%%%%%%%%%%%%%%%%

\end{document}